\documentclass[12pt]{amsart} 

\hfuzz=200pt

\usepackage{amsmath,amsthm,amssymb,amsfonts}
\usepackage{epsfig}
\setlength{\textwidth}{165mm}
\setlength{\oddsidemargin}{22mm}
\addtolength{\oddsidemargin}{-1in}
\setlength{\evensidemargin}{22mm}  
\addtolength{\evensidemargin}{-1in}

\usepackage{color}  
\usepackage{graphicx}

\newtheorem{thm}{Theorem}
\newtheorem{lem}[thm]{Lemma}
\newtheorem{prop}[thm]{Proposition}
 
\theoremstyle{definition}

\theoremstyle{remark}
\newtheorem{remark}{Remark}

\newcommand{\tim}{\times}   
\newcommand{\R}{\mathbb R}
\newcommand{\T}{\mathbb T}
\newcommand{\Z}{\mathbb Z}
\newcommand{\pl}{\partial}

\newcommand{\N}{\mathbb N}

\renewcommand{\d}{\,\mathrm{d}}

\newcommand{\disp}{\displaystyle}

\newcommand{\gth}{\theta}

\newcommand{\gep}{\varepsilon}
\newcommand{\gd}{\delta}

\newcommand{\fr}{\frac}

\newcommand{\bye}{\end{document}}
\newcommand{\by}{\end{proof}\end{document}}

\def\gl{\lambda}

\def\ga{\alpha}
\def\gl{\lambda}

\def\go{\omega}

\def\Lip{\!\,{\rm Lip}\,}
\def\UC{\,{\rm UC}\,}

\def\mid{\,:\,}
\def\aln{&\,}
\def\erf{\eqref}




\begin{document}

\def\olQ{\,\overline{\!Q}} 
\def\olB{\,\overline{\!B}} 
\def\olD{\,\overline{\!D}}
\def\x{\hat x} \def\t{\hat t}\def\s{\hat s}
\def\p{\hat p}\def\q{\hat q}
\def\red#1{\textcolor{red}{#1}} 
\def\tny#1{\scriptscriptstyle#1}

\title[New PDE approach to the large time asymptotics]{ 
A new PDE approach to the large time asymptotics of solutions   
of Hamilton-Jacobi equations} 

\author[G. Barles, H. Ishii and H. Mitake] 
{Guy Barles, Hitoshi Ishii and Hiroyoshi Mitake}

\address[G. Barles]
{Laboratoire de Math\'ematiques et Physique 
Th\'eorique (UMR CNRS 7350), F\'ed\'eration Denis Poisson, 
Universit\'e de Tours (FR CNRS 2964), 
Place de Grandmont, 
37200 Tours, FRANCE}
\email{barles@lmpt.univ-tours.fr}
\urladdr{http://www.lmpt.univ-tours.fr/~barles} 

\address[H. Ishii]
{Faculty of Education and Integrated Arts and Sciences, Waseda University, Nishi-Waseda,
Shinjuku, Tokyo 169-8050, Japan/ Faculty of Science, King Abdulaziz University, 
P. O. Box 80203  Jeddah, 21589 Saudi Arabia.}
\email{hitoshi.ishii@waseda.jp}
\urladdr{http://www.edu.waseda.ac.jp/~ishii/}

\address[H. Mitake]
{Department of Applied Mathematics, 
Faculty of Science, Fukuoka University, Fukuoka 814-0180 Japan}
\email{mitake@math.sci.fukuoka-u.ac.jp}

\thanks{  
The work of HI was supported in part by KAKENHI  
\#20340019, \#21340032, \#21224001, \#23340028 and
\#23244015, JSPS}
\thanks{  
The work of HM was supported in part by KAKENHI  
\#24840042, JSPS and Grant for Basic Science Research Projects from the 
Sumitomo Foundation}

\dedicatory{Dedicated to Professor Neil S. Trudinger on the occasion of his 70th birthday}

\subjclass[2010]{Primary 35F21; Secondary 35B40, 35D40, 35F31, 49L25}   



\keywords{Asymptotic behavior, Hamilton-Jacobi equations, PDE approach}

\begin{abstract}
We introduce a new PDE approach to establishing 
the large time asymptotic behavior of solutions of Hamilton-Jacobi equations,  
which modifies and simplifies the previous ones (\cite{BS, BIM}), under 
a refined ``strict convexity'' assumption on the Hamiltonians.  
Not only 
such ``strict convexity'' conditions 
generalize the corresponding requirements on the 
Hamiltonians in \cite{BS}, but also one of the most refined our conditions covers 
the situation studied in \cite{NR}.     
\end{abstract}
\maketitle

\section{Introduction}  

In this article we introduce a new PDE approach to establishing 
the large time asymptotic behavior of solutions of Hamilton-Jacobi equations. 

In the last two decades there have been major developments in the study of 
the large time asymptotics of solutions of Hamilton-Jacobi equations, 
initiated by the work by Namah and Roquejoffre \cite{NR} and by Fathi \cite{F}. 

The approach by Fathi is based on the weak KAM theory and the representation of solutions 
of the Hopf-Lax-Oleinik type or, in other words, as the value functions of optimal control, 
and has a wide scope which is different from the one in Namah-Roquejoffre \cite{NR}. 
The optimal control/dynamical approach of Fathi has been subsequently developed for further applications 
and technical improvements by many authors (see, for instance, \cite{DS, FIL, II3, I1, I3, M1, M2}). 

At the beginning of the developments mentioned above, 
another approach has been introduced by the first author and Souganidis \cite{BS}, 
which does not 
depend on the representation formulas of solutions and thus applies to a more general class of 
Hamilton-Jacobi equations including those with non-convex Hamiltonians. 
We refer for recent developments in this direction to \cite{BM, BIM}. 

We also refer \cite{BIM} for further comments and references related to the large time asymptotics 
of solutions of Hamilton-Jacobi equations.    

Our aim here is to modify and slightly simplify the main ingredient in 
the PDE approach by the first author and Souganidis \cite{BS} 
as well as 
to refine the requirements on the Hamiltonians.

To clarify and simplify 
the presentation, we consider the asymptotic problem in the periodic setting. 
We are thus concerned with the Cauchy problem 
\[\left\{
\begin{aligned}
&u_t(x,t)+H(x,D_xu(x,t))=0&&\text{ in } Q,\\ 
&u(x,0)=u_0(x)&& \text{ for }\ x\in\R^n,
\end{aligned}\right.\tag{CP}
\]
where $Q:=\R^n\tim(0,\infty)$, $u$ represents the unknown function on $\olQ$, 
$u_t=u_t(x,t)=(\pl u/\pl t)(x,t)$, $D_xu(x,t)=((\pl u/\pl x_1)(x,t),...,(\pl u/\pl x_n)(x,t))$ 
and $u_0$ represents 
the initial data. The functions $u(x,t)$ and $u_0(x)$ are supposed to be periodic in $x$.  

We make the following assumptions throughout this article: 

\begin{itemize} 
\item[(A1)] The function $u_0$ is continuous in $\R^n$ and periodic with period  
$\Z^n$.  
\item[(A2)] $H\in C(\R^n\tim\R^n)$. 
\item[(A3)] The Hamiltonian $H(x,p)$ is periodic in $x$ with period $\Z^n$ for every $p\in\R^n$. 
\item[(A4)] The Hamiltonian $H$ is coercive. That is, 
\[
\lim_{r\to \infty}\inf\{H(x,p)\mid (x,p)\in\R^{2n},\, |p|\geq r\}=\infty. 
\]
\end{itemize}

Our notational conventions are as follows. 
We may regard functions $f(x)$ on $\R^n$ (resp., $g(x,y)$ on $\R^n\tim V$, where $V$ is a subset 
of $\R^m$) periodic in $x\in\R^n$ with period $\Z^n$ as functions on the torus $\T^n$ 
(resp., $\T^n\tim V$). In this viewpoint, we write $C(\T^n)$, $C(\T^n\tim V)$, etc, for  
the subspaces of all functions $f(x)$ in $C(\R^n)$, of all functions $g(x,y)$ 
in $C(\R^n\tim V)$, etc,  
periodic in $x$ with period $\Z^n$. We denote the sup-norm (or the $L^\infty$-norm) 
of a function $f$ by $\|f\|_\infty$ and $\|f\|_{L^\infty}$ interchangeably. 
Regarding the notion of solution of Hamilton-Jacobi 
equations, in this article we will be only concerned 
with viscosity solutions, viscosity subsolutions and viscosity supersolutions, 
which we refer simply as solutions, subsolutions and supersolutions.  
For any $R>0$, $B_R$ denotes the open ball of $\R^n$ with center at the origin 
and radius $R$.  
For any $X\subset\R^n$, $\UC(X)$ and $\Lip(X)$ denote the spaces of all uniformly continuous 
functions and all Lipschitz continuous functions on $X$, respectively.

We now recall the following basic results.  

\begin{thm} \label{thm:CP} Under the hypotheses \emph{(A1)--(A4)}, 
there exists a unique solution $u\in \UC(\T^n\tim[0,\,\infty))$ of \emph{(CP)}. 
Furthermore, if $u_0\in\Lip(\T^n)$, then $u\in\Lip(\T^n\tim[0,\,\infty))$.  
\end{thm}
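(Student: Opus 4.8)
The plan is to deduce uniqueness from a comparison principle, existence from Perron's method applied between explicit barriers, and the Lipschitz regularity from the coercivity (A4). First I would prove the following comparison principle: if $u$ is a bounded upper semicontinuous subsolution and $v$ a bounded lower semicontinuous supersolution of (CP) on $\T^n\times[0,T]$ with $u(\cdot,0)\le v(\cdot,0)$, then $u\le v$ on $\T^n\times[0,T]$. I would run the classical doubling-of-variables argument: subtracting $\eta/(T-t)$ from $u$ makes it a strict subsolution and confines any positive maximum of $u-v$ away from $t=T$, while the ordering at $t=0$ rules out $t=0$; one then maximizes $u(x,t)-v(y,s)-|x-y|^2/\varepsilon^2-|t-s|^2/\varepsilon^2$ and passes to the limit. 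The place where coercivity enters — and the step I expect to be the main obstacle — is the a priori bound on the auxiliary momentum $2(\hat x-\hat y)/\varepsilon^2$: once (A4) confines it to a fixed ball, the mere continuity (A2) of $H$ together with the compactness of $\T^n$ supplied by (A3) makes the difference of the two Hamiltonian values tend to $0$, contradicting the strict subsolution inequality. Uniqueness within $\UC(\T^n\times[0,\infty))$ is then immediate.

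Next, for $u_0\in\Lip(\T^n)$ with Lipschitz constant $L$, I would use the barriers $\psi^\pm(x,t):=u_0(x)\pm C_Lt$, where $C_L:=\max\{\,|H(x,p)|\mid x\in\R^n,\ |p|\le L\,\}$ is finite by (A2)--(A3). A direct check shows that $\psi^+$ is a supersolution and $\psi^-$ a subsolution of (CP): at a contact point with a $C^1$ test function the spatial gradient of the test function has norm at most $L$ (since $u_0$ is $L$-Lipschitz) while its time derivative equals $\pm C_L$, so the equation holds with the correct sign. Perron's method between $\psi^-$ and $\psi^+$ then yields a function $u$ with $\psi^-\le u\le\psi^+$ whose upper and lower semicontinuous envelopes are, respectively, a subsolution and a supersolution agreeing with $u_0$ at $t=0$; the comparison principle forces them to coincide, so $u$ is the unique continuous solution and $\|u(\cdot,t)-u_0\|_\infty\le C_Lt$.

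For the regularity, I would exploit that (CP) is autonomous in $t$: the time-translate $u(\cdot,\cdot+h)$ solves (CP) with datum $u(\cdot,h)$, so the comparison principle and the barrier estimate give $\|u(\cdot,t+h)-u(\cdot,t)\|_\infty\le\|u(\cdot,h)-u_0\|_\infty\le C_Lh$, i.e.\ $u$ is $C_L$-Lipschitz in $t$. Consequently, at any $(x_0,t_0)$ with $t_0>0$ where a $C^1$ function $\phi$ touches $u$ from above one has $|\phi_t(x_0,t_0)|\le C_L$, so the subsolution inequality forces $H(x_0,D_x\phi(x_0,t_0))\le C_L$ and hence, by (A4), $|D_x\phi(x_0,t_0)|\le R_0$ for a constant $R_0=R_0(C_L)$. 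Thus, for each fixed $t$, $u(\cdot,t)$ is a bounded subsolution of the coercive equation $|D_xv|=R_0$ and is therefore $R_0$-Lipschitz, uniformly in $t$; combined with the $t$-Lipschitz bound this gives $u\in\Lip(\T^n\times[0,\infty))$.

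Finally, for a general $u_0\in C(\T^n)=\UC(\T^n)$, I would approximate $u_0$ uniformly by functions $u_0^k\in\Lip(\T^n)$; the comparison principle gives $\|u^k-u^m\|_\infty\le\|u_0^k-u_0^m\|_\infty$ for the corresponding solutions, so $(u^k)$ converges uniformly to some $u$, which solves (CP) with datum $u_0$ by the stability of viscosity solutions under uniform convergence. As a uniform limit of the uniformly continuous functions $u^k$, $u$ belongs to $\UC(\T^n\times[0,\infty))$, and uniqueness is again supplied by the comparison principle. This completes the plan.
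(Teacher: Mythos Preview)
The paper does not give a self-contained proof of this theorem; it cites the existence part as a consequence of Corollaire~II.1 in \cite{B85} and remarks that, under (A2)--(A3), comparison is known between bounded semicontinuous sub- and supersolutions \emph{provided one of them is Lipschitz continuous}. Your scheme---Lipschitz barriers, Perron, Lipschitz regularity via time-translates and coercivity, and uniform approximation for general $u_0$---is the standard route and agrees with what the paper sketches.

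The one place where your plan overshoots is the comparison principle. You assert that (A4) ``confines the auxiliary momentum $2(\hat x-\hat y)/\varepsilon^2$ to a fixed ball'', but this is not automatic: at the doubled maximum the subsolution inequality reads $a_\varepsilon+H(\hat x,p_\varepsilon)\le 0$ with $a_\varepsilon=2(\hat t-\hat s)/\varepsilon^2+\eta/(T-\hat t)^2$, and nothing prevents $a_\varepsilon$ from being very negative, so coercivity gives no upper bound on $|p_\varepsilon|$. This is exactly why the paper states comparison only when one of the two functions is Lipschitz: if, say, the supersolution $v$ is Lipschitz, then $(p_\varepsilon,b_\varepsilon)\in D^{-}v(\hat y,\hat s)$ already bounds $|p_\varepsilon|$ by the Lipschitz constant of $v$, and the argument closes. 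Your overall program still goes through with this weaker comparison (you compare a general $\UC$ solution against the Lipschitz approximants $u^k$), but the Perron step---concluding $u^*=u_*$---then also needs care, since neither envelope is yet known to be Lipschitz; you should either invoke \cite{B85} directly for existence or first argue that the Perron envelope inherits Lipschitz regularity from the barriers before appealing to comparison.
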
 

\begin{thm} \label{thm:comp} Under the hypotheses \emph{(A2)--(A4)}, 
let $u,\,v\in\UC(\T^n\tim[0,\,\infty))$ be solutions of 
\[
u_t+H(x,D_xu)=0 \ \ \ \text{ in } \ Q. \tag{HJ}
\] 
Then  
\[
\|u-v\|_{L^\infty(Q)}\leq \|u(\cdot,\,0)-v(\cdot,\,0)\|_{L^\infty(\R^n)}.  
\]
\end{thm}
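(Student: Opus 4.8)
The plan is to establish, by the doubling-of-variables method, the one-sided estimate $\sup_Q(u-v)\le M_0$ with $M_0:=\|u(\cdot,0)-v(\cdot,0)\|_{L^\infty(\R^n)}$; exchanging the roles of $u$ and $v$ then yields the asserted contraction estimate. The one genuinely delicate point is that (A2)--(A4) give only continuity and coercivity of $H$, with no modulus controlling $|H(x,p)-H(y,p)|$ as $|p|\to\infty$; consequently the gradient penalization $(\bar x-\bar y)/\gep$ generated by the doubling cannot be estimated directly. I would circumvent this by first reducing, via Theorem~\ref{thm:CP}, to the case in which one of the two functions is Lipschitz in $(x,t)$.

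\emph{Step 1: the case in which one of $u,v$ is Lipschitz.} Say $v\in\Lip(\T^n\tim[0,\infty))$ with Lipschitz constant $L$ (the case of $u$ Lipschitz is entirely symmetric). Arguing by contradiction, assume $u(\x,\t)-v(\x,\t)\ge M_0+2\gd$ for some $(\x,\t)\in Q$ and $\gd>0$. Fix $T>\t$ and, for $\gamma>0$ small, let $(\bar x,\bar y,\bar t,\bar s)$ be a maximum point over $\T^n\tim\T^n\tim[0,T)\tim[0,T)$ of
\[
\Phi(x,y,t,s)=u(x,t)-v(y,s)-\frac{|x-y|^2+(t-s)^2}{2\gep}-\gamma\Bigl(\frac{1}{T-t}+\frac{1}{T-s}\Bigr),
\]
which exists because $\T^n$ is compact, $u,v$ are bounded on $\T^n\tim[0,T]$, and $\Phi\to-\infty$ as $t$ or $s\to T$. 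For $\gamma$ small, $\Phi(\bar x,\bar y,\bar t,\bar s)\ge\Phi(\x,\x,\t,\t)\ge M_0+\gd$, which keeps $\gamma/(T-\bar t)$ and $\gamma/(T-\bar s)$ bounded, so $\bar t,\bar s\le T-c$ for some $c>0$. Comparing $\Phi$ at the maximum with its value at $(\bar x,\bar x,\bar t,\bar t)$ and using that $v$ is $L$-Lipschitz gives
\[
\frac{|\bar x-\bar y|^2+(\bar t-\bar s)^2}{2\gep}\le v(\bar x,\bar t)-v(\bar y,\bar s)\le L\bigl(|\bar x-\bar y|+|\bar t-\bar s|\bigr),
\]
hence $|\bar x-\bar y|+|\bar t-\bar s|\le CL\gep$, so that $p_\gep:=(\bar x-\bar y)/\gep$ and $(\bar t-\bar s)/\gep$ are bounded \emph{uniformly in $\gep$} --- this is precisely what the Lipschitz hypothesis buys. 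With this bound and the uniform continuity of $u$ on $\T^n\tim[0,T]$ one checks in the standard way that $\bar t,\bar s>0$ for $\gep$ small (recall also $\bar t,\bar s\le T-c$), so the test functions below are $C^1$ near the relevant points. Then $(x,t)\mapsto v(\bar y,\bar s)+\frac{|x-\bar y|^2+(t-\bar s)^2}{2\gep}+\gamma(\frac{1}{T-t}+\frac{1}{T-\bar s})$ touches $u$ from above at $(\bar x,\bar t)$, and $(y,s)\mapsto u(\bar x,\bar t)-\frac{|\bar x-y|^2+(\bar t-s)^2}{2\gep}-\gamma(\frac{1}{T-\bar t}+\frac{1}{T-s})$ touches $v$ from below at $(\bar y,\bar s)$; writing the sub- and supersolution inequalities and subtracting, the common term $(\bar t-\bar s)/\gep$ cancels and one is left with
\[
\gamma\Bigl(\frac{1}{(T-\bar t)^2}+\frac{1}{(T-\bar s)^2}\Bigr)\le H(\bar y,p_\gep)-H(\bar x,p_\gep).
\]
Since $|p_\gep|\le R$ with $R$ independent of $\gep$, $H$ is uniformly continuous on the compact set $\T^n\tim\overline{B_R}$, so the right-hand side is $\le\omega_H(|\bar x-\bar y|)\le\omega_H(CL\gep)$ for a modulus $\omega_H$, whereas the left-hand side is $\ge\gamma/T^2>0$; letting $\gep\to0$ gives a contradiction. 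Hence $\sup_Q(u-v)\le M_0$ whenever $u$ or $v$ is Lipschitz.

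\emph{Step 2: the general case.} For arbitrary solutions $u,v\in\UC(\T^n\tim[0,\infty))$ with initial data $u_0,v_0$, choose $v_0^k\in\Lip(\T^n)$ (periodic mollifications, say) with $\|v_0^k-v_0\|_\infty\to0$, and let $v^k$ be the solution of (CP) with data $v_0^k$, which is Lipschitz by Theorem~\ref{thm:CP}. Applying Step~1 to the pair $(v,v^k)$ in both directions gives $\|v^k-v\|_{L^\infty(Q)}\le\|v_0^k-v_0\|_\infty$, and applying it to $(u,v^k)$ gives $\sup_Q(u-v^k)\le\|u_0-v_0^k\|_\infty\le M_0+\|v_0^k-v_0\|_\infty$; hence $\sup_Q(u-v)\le M_0+2\|v_0^k-v_0\|_\infty$, and $k\to\infty$ yields $\sup_Q(u-v)\le M_0$. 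Exchanging $u$ and $v$ gives $\sup_Q(v-u)\le M_0$ as well, and the two bounds together are exactly $\|u-v\|_{L^\infty(Q)}\le M_0$. The only real obstacle is the one flagged at the outset --- the absence of any structure condition relating $H(x,p)$ to $H(y,p)$ for large $|p|$; everything else (existence of the maximum, the estimates forcing $\bar t,\bar s\in(0,T)$, and the mollification step) is routine once the problem has been reduced to the Lipschitz setting, where the penalization points are $O(\gep)$-close and $p_\gep$ is thereby tamed.
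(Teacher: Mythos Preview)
Your proof is correct and follows exactly the two-step strategy the paper sketches (but does not detail) immediately after the statement of Theorem~\ref{thm:comp}: first establish the comparison principle when one of the two solutions is Lipschitz (so that the doubling-of-variables penalization produces a bounded gradient), then pass to general $\UC$ solutions by approximating the initial data and invoking Theorem~\ref{thm:CP}. One minor quibble: in your displayed inequality $\frac{|\bar x-\bar y|^2+(\bar t-\bar s)^2}{2\gep}\le v(\bar x,\bar t)-v(\bar y,\bar s)$, comparing $\Phi(\bar x,\bar y,\bar t,\bar s)$ with $\Phi(\bar x,\bar x,\bar t,\bar t)$ also produces a term $\gamma\bigl(\tfrac{1}{T-\bar t}-\tfrac{1}{T-\bar s}\bigr)$, but since $\bar t,\bar s\le T-c$ with $c$ independent of $\gep$ this term is $O(|\bar t-\bar s|)$ and is harmlessly absorbed into the Lipschitz bound.
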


\begin{thm} \label{thm:EP} 
Under the hypotheses \emph{(A2)--(A4)}, there exists a unique constant $c\in\R$ 
such that the problem
\[
H(x,Dv(x))=c \ \ \ \text{ in } \ \R^n \tag{EP}
\]
has a solution $v\in \Lip(\T^n)$. 
\end{thm}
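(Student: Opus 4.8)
The plan is to obtain the constant $c$ as the limit of solutions of the discounted (ergodic) approximation, and then to deduce its uniqueness directly from the comparison principle of Theorem~\ref{thm:comp}. For $\lambda>0$ I would first solve
\[
\lambda v_\lambda(x)+H(x,Dv_\lambda(x))=0\quad\text{in }\R^n .
\]
This equation is proper (nondecreasing in the zeroth-order term), so it enjoys the standard comparison principle in $\UC(\T^n)$; since moreover $H$ is continuous, periodic in $x$, and bounded below (the last point because of (A4)), Perron's method together with the constant sub- and supersolutions $-\lambda^{-1}\sup_{\T^n}H(\cdot,0)$ and $-\lambda^{-1}\inf_{\R^n\tim\R^n}H$ (both finite, the latter by (A4)) produces a unique solution $v_\lambda\in\UC(\T^n)$ with $\|\lambda v_\lambda\|_\infty\le C_0$, where $C_0$ does not depend on $\lambda$. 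Rewriting the equation as $H(x,Dv_\lambda)=-\lambda v_\lambda\le C_0$ and invoking the coercivity (A4) in the usual way then yields a Lipschitz bound $\|Dv_\lambda\|_\infty\le R_0$ with $R_0$ again independent of $\lambda$.

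Next I set $w_\lambda:=v_\lambda-\min_{\T^n}v_\lambda$. The family $\{w_\lambda\}_{0<\lambda\le1}$ is uniformly bounded (by $R_0$ and the compactness of $\T^n$) and uniformly Lipschitz, so the Arzel\`a--Ascoli theorem provides a sequence $\lambda_j\downarrow0$ along which $w_{\lambda_j}\to v$ uniformly for some $v\in\Lip(\T^n)$. Passing to a further subsequence so that the bounded reals $\lambda_j\min_{\T^n}v_{\lambda_j}$ converge, and using $\lambda_j v_{\lambda_j}=\lambda_j w_{\lambda_j}+\lambda_j\min_{\T^n}v_{\lambda_j}$ together with $\|\lambda_j w_{\lambda_j}\|_\infty\to0$, I get $\lambda_j v_{\lambda_j}\to -c$ uniformly for some constant $c\in\R$. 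The stability of viscosity solutions under uniform convergence then shows that $v$ solves $H(x,Dv)=c$ in $\R^n$, which proves existence.

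For uniqueness, suppose $v_i\in\Lip(\T^n)$ solves $H(x,Dv_i)=c_i$ for $i=1,2$. Then $u_i(x,t):=v_i(x)-c_it$ is a solution of (HJ) on $Q$ and belongs to $\UC(\T^n\tim[0,\infty))$ (it is in fact Lipschitz there), so Theorem~\ref{thm:comp} gives $\sup_Q|u_1-u_2|\le\|v_1-v_2\|_{L^\infty(\R^n)}<\infty$. Since $u_1(x,t)-u_2(x,t)=v_1(x)-v_2(x)+(c_2-c_1)t$ is unbounded on $Q$ whenever $c_1\ne c_2$, we conclude $c_1=c_2$.

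The step demanding the most care is the passage $\lambda\to0$, where one must extract simultaneously the convergence of the normalized correctors $w_\lambda$ and of the constants $\lambda v_\lambda$; the decisive ingredient is the $\lambda$-independent Lipschitz estimate, which in turn rests on coercivity (A4) together with the $\lambda$-independent bound on $\|\lambda v_\lambda\|_\infty$. By contrast, uniqueness is essentially immediate once one has the idea of converting the two stationary solutions into solutions of the time-dependent equation and appealing to Theorem~\ref{thm:comp}.
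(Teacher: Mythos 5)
Your proof is correct and is essentially the classical vanishing-discount (ergodic approximation) argument of Lions--Papanicolaou--Varadhan, which is exactly the proof the paper points to by citing \cite{LPV} for Theorem \ref{thm:EP} rather than proving it; the uniqueness step via $u_i(x,t)=v_i(x)-c_it$ and Theorem \ref{thm:comp} is also the standard one. The only cosmetic caveat is that comparison (and hence Perron's method) for the discounted problem under (A2)--(A4) rests on the coercivity-induced Lipschitz bound for subsolutions rather than on properness alone, the same remark the paper makes for (CP), and your argument already supplies that bound.
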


These theorems are classical results in viscosity solutions theory. 
For instance, the existence part of Theorem \ref{thm:CP} is a consequence of 
Corollaire II.1 in \cite{B85}. Under assumptions (A2) and (A3), as is well known,
the comparison principle holds between bounded 
semicontinuous sub and supersolutions of (CP) if one of them is Lipschitz continuous. 
This comparison result and the existence part of Theorem \ref{thm:CP} assure that 
for each continuous solution $u$ of (CP) there is a sequence $\{u_k\}_{k\in\N}$ 
of Lipschitz continuous solutions of (CP), with $u_0$ replaced by $u_k(\cdot,0)$, 
which converges to $u$ 
uniformly in $\olQ$. The existence of such a sequence of Lipschitz 
continuous solutions of (CP) and the comparison principle for Lipschitz 
continuous 
solutions of (CP) guarantees the Theorem \ref{thm:comp} holds.   
Theorem \ref{thm:EP} and its proof can be found in \cite{LPV}.

The problem of finding a pair $(c,v)\in\R\tim C(\T^n)$, where $v$ satisfies (EP) 
in the viscosity sense, is called an additive eigenvalue problem or 
ergodic problem. Thus, for such a pair $(c,v)$, the function $v$ (resp., the constant $c$) 
is called an additive eigenfunction (resp., eigenvalue). 

We note that the conditions (A2)--(A4) are invariant under addition of constants. 
Hence, by replacing $H$ by $H-c$, with $c$ being the additive eigenvalue of (EP), 
we may normalize so that the additive eigenvalue $c$ is zero.  
Thus, in what follows, we always assume that 
\begin{itemize}
\item[(A5)] \ $c=0$, where $c$ denotes the additive eigenvalue.   
\end{itemize}
Accordingly, problem (EP) becomes simply a stationary problem 
\begin{equation}\label{S}
H(x,\,Dv(x))=0 \ \ \ \text{ in } \ \R^n.
\end{equation}

The crucial assumptions in this article are the following conditions. 

\begin{itemize}
\item[(A6)$_{\tny+}$]There exist constants $\eta_0>0$ and $\gth_0>1$ and for each 
$(\eta,\gth)\in (0,\eta_0)\tim(1,\gth_0)$ a constant $\psi=\psi(\eta,\gth)>0$ such that 
for all $x, p,q\in\R^n$, if $H(x,p)\leq 0$ and $H(x,q)\geq \eta$, then 
\[
H(x,p+\gth (q-p))\geq \eta\gth +\psi. 
\] 
\item[(A6)$_{\tny-}$]There exist constants $\eta_0>0$ and $\gth_0>1$ and for each 
$(\eta,\gth)\in (0,\eta_0)\tim(1,\gth_0)$ a constant $\psi=\psi(\eta,\gth)>0$ such that 
for all $x, p,q\in\R^n$, if $H(x,p)\leq 0$ and $H(x,q)\geq -\eta$, then 
\[
H(x,p+\gth (q-p))\geq -\eta\gth +\psi. 
\] 
\end{itemize}

We will furthermore modify and refine these conditions 
(see (A9)$_{\tny\pm}$) in Section 4, one of which covers 
the situation studied by Namah-Roquejoffre \cite{NR}. An important consequence 
is that our PDE method gives a unified approach to most of the large time asymptotic 
convergence results for (CP) in the literature.  

The assumptions above 
are some kind of strict convexity requirements and they are satisfied if $H$ is strictly convex in $p$. Indeed in this case, since $q = \theta^{-1}(p+\theta(q-p)) +(1-\theta^{-1})p$,
\begin{align*}
H(x,q) &< \theta^{-1}H(x, p+\theta(q-p)) + (1-\theta^{-1})H(x, p)\\
& <  \theta^{-1}H(x, p+\theta(q-p))\; ,
\end{align*}
and $\psi$ measures how strict is this inequality. We point out that, for (A6)$_{\tny-}$, this argument is valid if $p\neq q$ and the inequality is obvious if $p=q$, while in the case of (A6)$_{\tny+}$ clearly we have always $p\neq q$.

One may have another interpretation of these assumptions, namely 
that the function $H(x,r)$, as a function of $r$, grows more than linearly 
on the line segment connecting from $q$ to $p+\gth_0(q-p)$ for some $\gth_0>1$. 
(Notice that this growth rate is negative in the case of (A6)$_{\tny-}$.)

We conclude these remarks on (A6)$_{\tny\pm}$ by pointing out that (A6)$_{\tny+}$ is an assumption on the behavior of $H$ on the set $\{H\geq 0\}$ while (A6)$_{\tny-}$ is an assumption on the behavior of $H$ on the set $\{H\leq 0\}$. We refer to Section 3 for more precise comments in this direction.

A condition similar to (A6)$_{\tny+}$ has appeared first in Barles-Souganidis \cite{BS} 
(see (H4) in \cite{BS}). Our condition (A6)$_{\tny+}$ is less stringent and has 
a wider application than (A6)$_{\tny+}$ in \cite{BIM}. For this comparison, see Section 3. 
Also, (A6)$_{\tny-}$ is less stringent than (A6)$_{\tny-}$ in \cite{BIM}. 
A type of condition (A6)$_{\tny-}$ has first introduced in Ichihara-Ishii \cite{II2} for 
convex Hamiltonians (see the condition (16) in \cite{II2}).

We establish the following theorem by a PDE approach which modifies and simplifies 
the previous ones in \cite{BS, BIM}.   

\begin{thm}\label{thm:main} 
Assume that \emph{(A1)--(A5)} hold and that either \emph{(A6)$_{\tny+}$} or 
\emph{(A6)$_{\tny-}$} holds. 
Then the unique solution $u(x,t)$ in $\UC(\T^n\tim[0,\,\infty))$ of \emph{(CP)} 
converges uniformly in $\R^n$, as $t\to\infty$, 
to a function $u_\infty(x)$ in $\Lip(\T^n)$, which is a solution of \eqref{S}.
\end{thm}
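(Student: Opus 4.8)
The plan is to follow the classical two-step scheme for large-time convergence of viscosity solutions: first establish convergence along the ``natural'' monotone quantity, then upgrade to full convergence using the contraction estimate of Theorem~\ref{thm:comp}. A convenient reduction is to replace $u_0$ by a Lipschitz initial datum (by Theorem~\ref{thm:CP}, the approximating Lipschitz solutions converge uniformly to $u$ and Theorem~\ref{thm:comp} propagates this uniformity in $t$), so we may assume $u\in\Lip(\T^n\tim[0,\infty))$ throughout; this gives us a priori bounds on $u_t$ and $D_xu$ that will be needed below. The heart of the matter is to show that the half-relaxed limits
\[
\ol u(x):=\limsup_{\substack{t\to\infty\\ y\to x}}u(y,t),\qquad \iol u(x):=\liminf_{\substack{t\to\infty\\ y\to x}}u(y,t)
\]
coincide; once $\ol u=\iol u=:u_\infty$, a standard argument (using that $t\mapsto u(\cdot,t)$ is bounded and the semigroup is a contraction) forces uniform convergence, and $u_\infty$ is a solution of \eqref{S} by stability of viscosity solutions.

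Next I would set up the monotone quantity. Let $v\in\Lip(\T^n)$ be a solution of the stationary problem \eqref{S} given by Theorem~\ref{thm:EP} (with $c=0$ by (A5)). By the comparison principle applied to $u(x,t)$ and $v(x)+\|u_0-v\|_\infty$, and a standard translation-in-time argument, the function $t\mapsto \|u(\cdot,t)-v\|_\infty$ is nonincreasing, and more usefully $t\mapsto\sup_x(u(x,t)-v(x))$ and $t\mapsto\inf_x(u(x,t)-v(x))$ are each monotone. Hence $m:=\lim_{t\to\infty}\big(\sup_x u(x,t)-v(x)\big)$ exists. The key claim is that $\ol u - v \equiv m$, i.e. the limsup profile is a constant shift of $v$ on the whole torus, not just where the sup is attained. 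To prove this one considers the limit profile $\ol u$, which is a subsolution of \eqref{S}, satisfies $\ol u\le v+m$ everywhere with equality somewhere, and one wants to propagate equality. This is exactly where assumptions (A6)$_\pm$ enter: at a point where $\ol u<v+m$ strictly, the strict-convexity-type inequality lets one build a strict subsolution near that point and then ``slide'' it, contradicting the fact that $m$ was the asymptotic value of the sup. The dichotomy (A6)$_+$ vs.\ (A6)$_-$ corresponds to whether one works on the region where the relevant Hamiltonian values are $\ge 0$ or $\le 0$; in either case the mechanism is: perturb $\ol u$ (or $v$) along segments $p\mapsto p+\theta(q-p)$, use (A6)$_\pm$ to gain the quantitative margin $\psi(\eta,\theta)>0$, and convert that margin into a genuine decrease of the relevant sup/inf, contradicting stabilization.

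The main obstacle — and the place where the ``new PDE approach'' of the paper presumably does its work — is precisely this propagation-of-equality step: showing that the asymptotic monotone limit is attained \emph{uniformly} in $x$ and not merely at some points. In the convex (weak-KAM) setting this is handled by Aubry-set/optimal-control representation formulas, which are unavailable here; so one must argue entirely at the PDE level. The technical core will be a lemma of the following shape: if $w$ is a subsolution of \eqref{S} with $w\le v$ and $w(x_0)=v(x_0)$ for some $x_0$, and if, along the time-evolution, $w$ cannot be pushed strictly below $v$ without violating stabilization of the sup, then (A6)$_\pm$ forces $w\equiv v$ up to the additive constant. Implementing this requires care with the coercivity bound (A4) (to control the relevant gradients and stay in a compact $p$-region where $\psi$ is uniform), with the interplay between the two profiles $\ol u,\iol u$, and with the doubling-of-variables machinery needed to make the sliding argument rigorous in the viscosity framework. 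Once that lemma is in hand, $\ol u=v+m$ and a symmetric argument gives $\iol u=v+m'$ for a possibly different constant $m'$; a final comparison-principle argument (again using Theorem~\ref{thm:comp} and the monotonicity of $\sup$ and $\inf$) forces $m=m'$, yielding $u_\infty:=v+m$, and stability of viscosity solutions shows $u_\infty$ solves \eqref{S}.
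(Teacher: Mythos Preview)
Your framework differs substantially from the paper's, and the step you flag as ``the main obstacle'' is not a technical detail to be filled in later---it is the entire content of the theorem, and the mechanism you sketch for it is not the one the paper uses and, as stated, does not work.

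The paper does \emph{not} argue via half-relaxed limits and a stationary propagation-of-equality lemma. Instead, for each $(\eta,\gth)\in(0,\eta_0)\tim(1,\gth_0)$ coming from (A6)$_{\tny+}$ it introduces the time-dependent auxiliary function
\[
w(x,t)=\sup_{s\ge t}\bigl[u(x,t)-v_0(x)-\gth\bigl(u(x,s)-v_0(x)+\eta(s-t)\bigr)\bigr]
\]
(with a symmetric ``$\max_{0\le s\le t}$'' variant under (A6)$_{\tny-}$), and shows by a tripling-of-variables argument that $w$ is a viscosity subsolution of
\[
\min\{w,\ w_t-\go_{H,R}(|D_xw|)+\psi\}\le 0\quad\text{in }Q.
\]
From this one gets $\max\{w(\cdot,t),0\}\to 0$ uniformly, which unwinds to an \emph{asymptotic monotonicity} statement: every subsequential limit $z(x,t)$ of $u(\cdot,\cdot+\tau_j)$ is nondecreasing in $t$ (nonincreasing under (A6)$_{\tny-}$). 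Bounded plus monotone gives a limit $u_\infty$, and Theorem~\ref{thm:comp} upgrades this to full convergence. The use of (A6)$_{\tny\pm}$ is intrinsically time-dependent: the point $q$ with $H(x,q)\ge\eta$ arises from the supersolution inequality $u_s+H(x,D_xu)\ge 0$ combined with the optimality condition $u_s=-\eta$ at the maximizing $s$. There is no purely stationary version of this step.

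Your proposed lemma---a subsolution $\ol u$ of \eqref{S} touching $v+m$ at one point must equal $v+m$ everywhere---is false even for strictly convex Hamiltonians satisfying (A6)$_{\tny\pm}$: take $H(p)=|p|^2-1$ on $\T^1$, $v$ the distance function to the origin, and $\ol u\equiv 0$; then $\ol u\le v$, $\ol u(0)=v(0)$, but $\ol u\not\equiv v$. The extra clause you add (``cannot be pushed strictly below $v$ along the time-evolution without violating stabilization'') is too vague to repair this, and it is unclear how the inequality in (A6)$_{\tny\pm}$---which requires a $q$ with $H(x,q)\ge\pm\eta$---could ever be triggered in a comparison between two stationary subsolutions, both of which have $H\le 0$ at every test point. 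Finally, even granting $\ol u=v+m$ and $\iol u=v+m'$, your closing line ``a comparison-principle argument forces $m=m'$'' is unjustified: nothing you have written rules out $m'<m$, and solutions of \eqref{S} are not unique up to additive constants.
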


A generalization of the theorem above is given in Section 4 (see Theorem \ref{thm:general}), 
which covers the main result in \cite{NR} in the periodic setting.

In Section 2, we give 
an explanation of the new ingredient in our new PDE method, 
a (hopefully transparent) formal proof of Theorem \ref{thm:main} 
by the new PDE method and  its exact version. 
In Section 3, we make comparisons between (A6)$_{\tny\pm}$ and its classical versions, 
and discuss convexity-like properties of the Hamiltonians $H$ implied by (A6)$_{\tny\pm}$ 
as well as a couple of conditions equivalent to (A6)$_{\tny\pm}$.  
In Section 4, we present a theorem, with (A6)$_{\tny\pm}$ replaced by refined 
conditions, which includes the situation in \cite{NR} as a special case.

\section{Proof of Theorem \ref{thm:main}} 

Throughout this section, we assume that (A1)--(A5) hold. The first step consists in reducing to the case when $u_0\in \Lip(\T^n)$ and therefore $u$ is Lipschitz continuous on $\T^n\tim[0,\,\infty)$.

\begin{lem}If the result of Theorem \ref{thm:main} holds for any $u_0 \in \Lip(\T^n)$ then it holds for any $u_0 \in C(\T^n)$.
\end{lem}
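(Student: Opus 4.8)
The plan is to exploit the contraction property of Theorem \ref{thm:comp} together with the density of Lipschitz solutions already noted in the discussion following Theorem \ref{thm:CP}. Given $u_0\in C(\T^n)$, let $u$ be the solution of (CP). First I would approximate $u_0$ uniformly by a sequence $\{u_0^k\}_{k\in\N}\subset\Lip(\T^n)$, for instance by mollification (which preserves $\Z^n$-periodicity), so that $\|u_0^k-u_0\|_\infty\to 0$. Let $u^k\in\Lip(\T^n\tim[0,\infty))$ be the corresponding solutions of (CP) with initial data $u_0^k$, which exist by Theorem \ref{thm:CP}. By Theorem \ref{thm:comp},
\[
\|u^k(\cdot,t)-u(\cdot,t)\|_{L^\infty(\R^n)}\le \|u_0^k-u_0\|_{L^\infty(\R^n)}=:\gep_k\to 0
\]
uniformly in $t\ge 0$.

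Next, assuming Theorem \ref{thm:main} holds in the Lipschitz case, each $u^k(\cdot,t)$ converges uniformly on $\R^n$, as $t\to\infty$, to some $u_\infty^k\in\Lip(\T^n)$ solving \eqref{S}. I would then show that $\{u_\infty^k\}$ is Cauchy in $C(\T^n)$: indeed, for fixed $k,m$, passing to the limit $t\to\infty$ in $\|u^k(\cdot,t)-u^m(\cdot,t)\|_\infty\le\gep_k+\gep_m$ gives $\|u_\infty^k-u_\infty^m\|_\infty\le\gep_k+\gep_m$. Hence $u_\infty^k\to u_\infty$ uniformly for some $u_\infty\in C(\T^n)$. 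Since each $u_\infty^k$ is a (viscosity) solution of \eqref{S} and the stability of viscosity solutions under uniform convergence applies, $u_\infty$ is also a solution of \eqref{S}; moreover $u_\infty\in\Lip(\T^n)$ because, by coercivity (A4), any bounded solution of \eqref{S} is Lipschitz with a constant depending only on $H$ (the Lipschitz bounds of the $u_\infty^k$ are uniform in $k$, and this bound passes to the limit).

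Finally I would combine the estimates by a three-$\gep$ argument: for any $\gep>0$ pick $k$ with $\gep_k<\gep/3$ and $\|u_\infty^k-u_\infty\|_\infty<\gep/3$, then pick $T$ so that $\|u^k(\cdot,t)-u_\infty^k\|_\infty<\gep/3$ for all $t\ge T$; for such $t$,
\[
\|u(\cdot,t)-u_\infty\|_\infty\le \|u(\cdot,t)-u^k(\cdot,t)\|_\infty+\|u^k(\cdot,t)-u_\infty^k\|_\infty+\|u_\infty^k-u_\infty\|_\infty<\gep.
\]
This shows $u(\cdot,t)\to u_\infty$ uniformly on $\R^n$ as $t\to\infty$, completing the reduction.

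The routine points are the mollification (periodicity is automatic) and the three-$\gep$ bookkeeping. The only place needing a little care — and the main obstacle, such as it is — is verifying that the limit $u_\infty$ is again a genuine viscosity solution of \eqref{S} lying in $\Lip(\T^n)$; this rests on the standard stability theorem for viscosity solutions under locally uniform convergence and on the a priori Lipschitz estimate for bounded solutions of a coercive stationary equation, both of which are classical. It is worth noting that this lemma does not use (A6)$_{\tny\pm}$ at all: it is purely a consequence of the contraction estimate of Theorem \ref{thm:comp} and therefore justifies assuming $u_0\in\Lip(\T^n)$ throughout the rest of the proof of Theorem \ref{thm:main}.
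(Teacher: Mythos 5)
Your proposal is correct and follows essentially the same route as the paper: approximate $u_0$ by Lipschitz data, use the contraction estimate of Theorem \ref{thm:comp} to see that the limits $u_\infty^k$ form a Cauchy sequence, pass to a limit $u_\infty$, conclude by a three-$\gep$ argument, and invoke stability (plus the coercivity-based Lipschitz bound) to identify $u_\infty$ as a Lipschitz solution of \eqref{S}. The only differences are cosmetic, e.g.\ you derive the Cauchy property of $\{u_\infty^k\}$ through the triangle inequality via $u$ while the paper contracts $u_j$ against $u_k$ directly.
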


\begin{proof}

For a general $u_0\in C(\T^n)$ we select a sequence $\{u_{0,j}\}_{j\in\N}\subset\Lip(\T^n)$ 
which converges to $u_0$ uniformly in $\R^n$. For each $j\in\N$ let 
$u_j\in\Lip(\T^n\tim [0,\,\infty))$ be the unique solution of (CP), with $u_{0,j}$ 
in place of $u_0$. By Theorem \ref{thm:comp}, 
we have
\begin{equation}\label{2.1.3.6} 
\|u_j-u_k\|_{L^\infty(Q)}\leq \|u_{0,j}-u_{0,k}\|_{L^\infty(\R^n)} \ \ \ \text{ for all } 
\ j,k\in\N.
\end{equation}

Since Theorem \ref{thm:main} holds for any initial data in $\Lip(\T^n)$, we know that for each $j\in\N$ there exists a function 
$u_{\infty,j}\in C(\T^n)$ such that $\lim_{t\to\infty}u_j(x,t)=u_{\infty,j}(x)$ 
uniformly in $\R^n$. This implies 
\[
\|u_{\infty,j}-u_{\infty,k}\|_{L^\infty(\R^n)}\leq \|u_j-u_k\|_{L^\infty(Q)} 
\ \ \ \text{ for all } 
\ j,k\in\N,
\]
which together with \erf{2.1.3.6} 
yields 
\[
\|u_{\infty,j}-u_{\infty,k}\|_{\infty}\leq \|u_{0,j}-u_{0,k}\|_\infty 
\ \ \ \ \text{ for all } \ j,k\in\N.
\] 
Hence there is a function $u_\infty\in C(\T^n)$ such that 
$\lim_{j\to\infty}u_{\infty,j}(x)=u_\infty(x)$ uniformly in $\R^n$.

Observe by using Theorem \ref{thm:comp} that for any $j\in\N$, 
\begin{align*} 
\|u(\cdot,t)-u_\infty\|_\infty
&\leq \|u(\cdot,t)-u_j(\cdot,t)\|_\infty
+\|u_j(\cdot,t)-u_{\infty,j}\|_\infty
+\|u_{\infty,j}-u_\infty\|_\infty
\\&\leq \|u_0-u_{0,j}\|_\infty
+\|u_j(\cdot,t)-u_{\infty,j}\|_\infty
+\|u_{\infty,j}-u_\infty\|_\infty, 
\end{align*}
from which we conclude that $\lim_{t\to\infty}\|u(\cdot,t)-u_\infty\|_{\infty}=0$. 
By the stability property of viscosity solutions, we see that $u_\infty$ is a solution of 
\eqref{S} and, consequently, $u_\infty\in\Lip(\R^n)$ by Theorem \ref{thm:EP}.  
\end{proof}

Now we turn to the proof of Theorem \ref{thm:main} when $u_0 \in \Lip(\T^n)$. By Theorem \ref{thm:CP}, there exists a unique solution $u\in \Lip(\T^n\tim[0,\,\infty))$ of (CP) and we have to prove that 
$u(x,t)$ converges uniformly in $\R^n$ to a function $u_\infty(x)$ as $t\to\infty$. 

Henceforth in this section we assume that $u_0\in\Lip(\T^n)$ and hence the solution 
$u$ of (CP) is in $\Lip(\T^n\tim[0,\,\infty))$.  Also, 
we fix a solution 
$v_0\in \Lip(\T^n)$ of \eqref{S}. Such a function $v_0$ 
exists thanks to Theorem \ref{thm:EP}.  
We set $L:=\max\{\|D_x u\|_\infty, \|D_x v_0\|_\infty\}$.

If we set $z(x,t)=v_0(x)$ and invoke Theorem \ref{thm:comp}, then we get
\[
\|u-z\|_{L^\infty(Q)}\leq \|u_0-v_0\|_{L^\infty(\R^n)},
\] 
which shows that $u$ is bounded in $\olQ$.   
We may assume by adding a constant to $v_0$ if needed that for some constant $C_0>0$,  
\[
0\leq u(x,t)-v_0(x)\leq C_0 \ \ \ \text{ for all } \ (x,t)\in\olQ.
\]

\subsection{Under assumption (A6)$_{\tny+}$} 
Throughout this subsection we assume, in addition to (A1)--(A5), that (A6)$_{\tny+}$ holds. 
Let $\eta_0>0$ and $\gth_0>1$ be the constants from (A6)$_{\tny+}$.

For $(\eta,\theta) \in (0,\eta_0)\tim (1,\theta_0)$, we define the function $w$ on $\olQ$ by
\begin{equation}\label{eq2}
w(x,t)=\sup_{s\geq t}[u(x,t)-v_0(x)-\gth(u(x,s)-v_0(x)+\eta(s-t))].  
\end{equation}

The following proposition is crucial in our proof of Theorem \ref{thm:main} 
under (A6)$_{\tny+}$.  
To state the proposition, we introduce the functions 
$\go_{H,R}$, with $R>0$, as  
\[
\go_{H,R}(r)= \sup\{|H(x,p)-H(x,q)|\mid 
x\in\R^n,\, p,q\in \olB_R,\, |p-q|\leq r\}. 
\]
Note that for each $R>0$, the function $\go_{H,R}$ 
is nonnegative and nondecreasing in $[0,\,\infty)$ 
and $\go_{H,R}(0)=0$.


\begin{prop}\label{prop:main} 
Let $\psi=\psi(\eta,\gth)>0$ be the constant from \emph{(A6)$_{\tny+}$}.  
Then the function $w$ is a subsolution of 
\begin{equation}\label{eq3}
\min\{w(x,t),w_t(x,t)-\go_{H,R}(|D_x w(x,t)|)+\psi\}\leq 0 \ \ \ \text{ in } \ 
Q, 
\end{equation} 
where $R:=(2\theta_0+1)L$.  
\end{prop}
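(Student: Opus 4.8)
The plan is to check the viscosity subsolution inequality for $w$ directly from its definition, exploiting that the solution $u$ of (CP) is at the same time a subsolution and a supersolution of (HJ), while $v_0$ is a subsolution of \eqref{S}. First I would record that $w\in\Lip(\olQ)$: with $\gz:=u-v_0\in\Lip(\olQ)$ one has $w(x,t)=\gz(x,t)-\gth\inf_{s\ge t}\bigl(\gz(x,s)+\eta(s-t)\bigr)$, and this infimum, being an infimum of equi-Lipschitz functions (and finite since $\gz$ is bounded), is Lipschitz in $(x,t)$. Hence it suffices to fix $\gf\in C^1(Q)$ such that $w-\gf$ has a strict local maximum at a point $(\x,\t)\in Q$ (adding $|x-\x|^4+|t-\t|^4$ to $\gf$ makes the maximum strict without altering $D\gf(\x,\t)$), to suppose $w(\x,\t)>0$ (if $w(\x,\t)\le 0$ there is nothing to prove), and to derive $\gf_t(\x,\t)-\go_{H,R}(|D_x\gf(\x,\t)|)+\psi\le 0$. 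Since $\gz\ge 0$ and $\eta>0$, the bracket in \eqref{eq2} tends to $-\infty$ as $s\to\infty$, so the supremum defining $w(\x,\t)$ is attained at some $\s\ge\t$; and since its value at $s=\t$ equals $(1-\gth)\bigl(u(\x,\t)-v_0(\x)\bigr)\le 0<w(\x,\t)$, in fact $\s\in(\t,\infty)$.

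The key idea is to disentangle the three ``solution evaluations'' hidden in $w(\x,\t)$ — namely $u(\cdot,\t)$, $u(\cdot,\s)$ and $v_0$ — by tripling the space variable. For small $\gep>0$ I would maximize
\[
\Phi_\gep(x,y,z,t,s):=u(x,t)-\gth u(y,s)+(\gth-1)v_0(z)-\gth\eta(s-t)-\gf(x,t)-\fr1{2\gep}\bigl(|x-y|^2+|x-z|^2\bigr)-|s-\s|^2
\]
over a small closed box centred at $(\x,\x,\x,\t,\s)$, chosen thin enough that $(\x,\t)$ is the unique maximizer of $w-\gf$ on it, that $t>0$ there, and that $s>t$ throughout (possible since $\s>\t$). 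A standard penalization/compactness argument — using $\gz\ge 0$, $\eta>0$, the strictness of the maximum of $w-\gf$, the maximality of $\s$ in \eqref{eq2}, and the localizer $-|s-\s|^2$ — shows that the maximizers $(x_\gep,y_\gep,z_\gep,t_\gep,s_\gep)$ converge, along a subsequence $\gep\to 0$, to $(\x,\x,\x,\t,\s)$, with $\gep^{-1}|x_\gep-y_\gep|^2$ and $\gep^{-1}|x_\gep-z_\gep|^2\to 0$.

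Next, freezing any three of the five variables at their optimal values and letting the other two vary produces genuine $C^1$ test functions, since every remaining term is smooth; this is what keeps the whole argument first order (no Crandall--Ishii lemma is needed). Freezing $(y,z,s)$ gives a test function touching $u$ from above at $(x_\gep,t_\gep)$ with time-slope $\gf_t(x_\gep,t_\gep)-\gth\eta$ and space-slope $p_\gep:=D_x\gf(x_\gep,t_\gep)+\gep^{-1}(x_\gep-y_\gep)+\gep^{-1}(x_\gep-z_\gep)$, so the subsolution property of $u$ yields $H(x_\gep,p_\gep)\le\gth\eta-\gf_t(x_\gep,t_\gep)$. Freezing $(x,z,t)$ gives a test function touching $u$ from below at $(y_\gep,s_\gep)$ with $s$-slope $-\eta-\tfrac{2}{\gth}(s_\gep-\s)$ and space-slope $q_\gep:=(\gth\gep)^{-1}(x_\gep-y_\gep)$, so the supersolution property of $u$ yields $H(y_\gep,q_\gep)\ge\eta+\tfrac{2}{\gth}(s_\gep-\s)$. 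Freezing $(x,y,t,s)$ gives a test function touching $v_0$ from above at $z_\gep$ with space-slope $r_\gep:=-\bigl((\gth-1)\gep\bigr)^{-1}(x_\gep-z_\gep)$, so the subsolution property of $v_0$ yields $H(z_\gep,r_\gep)\le 0$. Since $u(\cdot,t)$ and $v_0$ are $L$-Lipschitz, $|p_\gep|,|q_\gep|,|r_\gep|\le L$, and a one-line computation gives the algebraic identity
\[
p_\gep-D_x\gf(x_\gep,t_\gep)=r_\gep+\gth(q_\gep-r_\gep).
\]

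Finally I would pass to the limit along the subsequence: using continuity of $H$ and of $D\gf$ and boundedness of the slopes in $\olB_L$, one obtains $\bar p,\bar q,\bar r\in\olB_L$ with $\bar p-D_x\gf(\x,\t)=\bar r+\gth(\bar q-\bar r)$, together with $H(\x,\bar p)\le\gth\eta-\gf_t(\x,\t)$, $H(\x,\bar q)\ge\eta$ and $H(\x,\bar r)\le 0$. Applying (A6)$_{\tny+}$ with $p=\bar r$ and $q=\bar q$ gives $H\bigl(\x,\bar r+\gth(\bar q-\bar r)\bigr)\ge\gth\eta+\psi$, that is, $H\bigl(\x,\bar p-D_x\gf(\x,\t)\bigr)\ge\gth\eta+\psi$. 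Both $\bar p\in\olB_L$ and $\bar p-D_x\gf(\x,\t)=(1-\gth)\bar r+\gth\bar q\in\olB_{(2\gth-1)L}$ lie in $\olB_R$ with $R=(2\gth_0+1)L$, so the definition of $\go_{H,R}$ gives $H\bigl(\x,\bar p-D_x\gf(\x,\t)\bigr)\le H(\x,\bar p)+\go_{H,R}(|D_x\gf(\x,\t)|)\le\gth\eta-\gf_t(\x,\t)+\go_{H,R}(|D_x\gf(\x,\t)|)$. Comparing the two bounds on $H\bigl(\x,\bar p-D_x\gf(\x,\t)\bigr)$ yields $\gth\eta+\psi\le\gth\eta-\gf_t(\x,\t)+\go_{H,R}(|D_x\gf(\x,\t)|)$, i.e. $\gf_t(\x,\t)-\go_{H,R}(|D_x\gf(\x,\t)|)+\psi\le 0$, as wanted. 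The main obstacle is the middle part: organizing the triple spatial doubling so that the three viscosity inequalities are asymptotically based at one and the same point $\x$ and so that their gradients fit the affine combination $r+\gth(q-r)$ required by (A6)$_{\tny+}$, and verifying the convergence of the maximizers; once this is set up correctly, the rest is routine bookkeeping.
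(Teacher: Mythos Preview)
Your proposal is correct and follows essentially the same route as the paper: triple the space variable with a penalty $\gep^{-1}(|x-y|^2+|x-z|^2)$, read off three viscosity inequalities for $u$ (sub at $(x_\gep,t_\gep)$, super at $(y_\gep,s_\gep)$) and $v_0$ (sub at $z_\gep$), pass to the limit, and combine via (A6)$_{\tny+}$ and the modulus $\go_{H,R}$. The only differences are cosmetic: your $(\bar p,\bar q,\bar r)$ correspond to the paper's $(D_x\phi_0(\x,\t)+\gth\hat q-(\gth-1)\hat p,\ \hat q,\ \hat p)$, and the paper packages the limit step through the closed sub/superdifferentials $\olD^\pm$ rather than extracting a convergent subsequence of the gradients, but the substance and the key algebraic identity $p_\gep-D_x\gf=r_\gep+\gth(q_\gep-r_\gep)$ are identical.
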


Our proof of Theorem \ref{thm:main} follows the outline of previous works like \cite{BS,BIM} where a key result is an asymptotic monotonicity property for $u$. This asymptotic monotonicity is a consequence of Proposition~\ref{prop:main} which, roughly speaking, implies that 
$\min\{u_t,0\} \to 0$ as $t \to \infty$. This is rigourously stated in Lemma~\ref{lem:w-infty} and its consequence in \erf{asymp.mon}. With assumption (A6)$_{\tny-}$, this is also the case but with a different monotonicity (i.e., $\max\{u_t,0\} \to 0$ as $t \to \infty$).

For this reason, the function $w$ defined by \erf{eq2} is a kind of Lyapunov function 
in our asymptotic analysis in a broad sense.  
The main new aspect in this article, compared to \cite{BS,BIM}, is indeed the simpler form of our $w$, which is defined 
by taking supremum in $s$ of the function 
\[
u(x,t)-v_0(x)-\gth(u(x,s)-v_0(x)+\eta(s-t)),
\]
whose functional dependence on $u$ and $v_0$ is linear. 
In the previous works, the function 
\begin{equation}\label{eq4} 
\sup_{s\geq t}\fr{u(x,s)-v_0(x)+\eta(s-t)}{u(x,t)-v_0(x)} 
\end{equation}
(one should assume here by adding a constant to $v_0$ if necessary 
that $\inf_{(x,t)\in Q}(u(x,t)-v_0(x))>0$), 
played the same role as our function $w$, and the value 
\[
\fr{u(x,s)-v_0(x)+\eta(s-t)}{u(x,t)-v_0(x)}
\]
depends nonlinearly in $u$ and $v_0$. 
One might see that the passage from the function given by \erf{eq4} 
to $w$ given by \erf{eq2} bears 
a resemblance that from the Kruzkov transform to 
a linear change in \cite{I} in the analysis of the comparison principle 
for stationary Hamilton-Jacobi equations. 

From a technical point of view, they are a lot of variants for such results. For example, as it is the case in \cite{BS}, one may look for a variational inequality for $m(t):= \max_{x\in\R^n}w(x,t)$ or for $m(t):= \max_{x\in\overline \Omega}w(x,t)$ where $\Omega$ is a suitable domain of $\R^n$. This last form can be typically useful when one wants to couple different assumptions on $H$ on $\Omega$ and its complementary as in \cite{BS} where the coupling with Namah-Roquejoffre type assumptions was solved in that way, the point being to control the behavior of $u$ on $\partial \Omega$.

For the connections between our assumptions and Namah-Roquejoffre type assumptions, we refer to Section~\ref{sec:NR}.

\subsubsection{A formal computation} Here we explain the algebra which bridges 
condition (A6)$_{\tny+}$ to Proposition \ref{prop:main} under the strong  
regularity assumptions that $u,\,w\in C^1(\T^n\tim[0,\,\infty))$ and $v_0\in C^1(\T^n)$ and that for each 
$(x,t)\in Q$ there exists an $s>t$ such that 
\begin{equation}\label{eq5}
w(x,t)=u(x,t)-v_0(x)-\gth(u(x,s)-v_0(x)+\eta(s-t)). 
\end{equation}
Of course, these conditions do not hold in general. 

Fix any $(x,t)\in Q$ and an $s>t$ so that \erf{eq5} holds.  
If $w(x,t)\leq 0$, then \erf{eq3} holds at $(x,t)$.  
We thus suppose that $w(x,t)>0$.   

Setting 
\[
p=Dv_0(x),\quad
q=D_xu(x,s), \quad
r=D_xu(x,t),\quad
a=u_s(x,s) \quad\text{and} \quad 
b=u_t(x,t),
\]
we have
\begin{align}
H(x,p)&\leq 0.  \label{eq6}
\\a+H(x,q)&\geq 0, \label{eq7}
\\b+H(x,r)&\leq 0, \label{eq8} 
\end{align}
Also, by the choice of $s$, we get
\begin{align}
D_xw(x,t)&=r-p-\gth(q-p), \label{eq9}
\\w_t(x,t)&=b+\gth\eta, \label{eq10}
\\0&=-\gth(a+\eta).\label{eq11}
\end{align}
Combining \eqref{eq7} and \eqref{eq11} yields
\begin{equation}\label{eq12}
H(x,q)\geq \eta.
\end{equation}

Now, in view of 
inequalities \eqref{eq6} and \eqref{eq12}, we may use assumption (A6)$_{\tny+}$, to get 
\[
H(x,p+\gth(q-p))\geq \gth\eta+\psi. 
\] 
Using \eqref{eq9}, we get
\[
H(x,r)=H(x,D_xw(x,t)+p+\gth(q-p)).
\] 
Using the definition of $L>0$, we clearly have
$$
|r|=|D_x u(x,t)|\leq L \leq R\; , \; |p+\gth(q-p)|\leq (1+2\theta) L \leq R
$$
and therefore we get 
\begin{align*}
H(x,r)
&\geq H(x,p+\gth(q-p))-\go_{H,R}(|D_xw(x,t)|)
\\&\geq 
-\go_{H,R}(|D_xw(x,t)|)+\gth\eta+\psi.
\end{align*}
This together with \eqref{eq8} and \eqref{eq10} yields
\begin{align*}
0&\geq b+H(x,r)=w_t(x,t)-\gth\eta +H(x,r)
\\&\geq w_t(x,t)-\gth\eta-\go_{H,R}(|D_xw(x,t)|)
+\gth\eta+\psi
\\&=w_t(x,t)-\go_{H,R}(|D_xw(x,t)|)+\psi.  
\end{align*}
This shows under our convenient regularity assumptions 
that \erf{eq3} holds. 

\begin{remark}\label{rem:v_0} The actual requirement to $v_0$ is just the subsolution property in the above 
computation, which is true also in the following proof of Theorem \ref{thm:main}. 
Some of subsolutions of \eqref{S} may have a better property, which solutions of \eqref{S} 
do not have. This is the technical insight in the generalization of 
Theorem \ref{thm:main} in Section 4. 
\end{remark}

\subsubsection{Proof of Proposition \ref{prop:main}} 
We begin with the following lemma.

\begin{lem} \label{lem-bounds} We have
\[
-C_0(\gth-1)\leq w(x,t)\leq C_0 \ \ \ \text{ for all } \ (x,t)\in\R^n\tim[0,\,\infty).
\]
\end{lem}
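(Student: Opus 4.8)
The statement is an immediate consequence of the two-sided bound $0\leq u(x,t)-v_0(x)\leq C_0$ on $\olQ$ together with the sign conditions $\gth>1$, $\eta>0$, and the structure of the supremum defining $w$. I would treat the upper and lower bounds separately, the only subtlety being careful bookkeeping of signs.

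\textbf{Upper bound.} Fix $(x,t)\in\R^n\tim[0,\,\infty)$. For every $s\geq t$ we have $u(x,s)-v_0(x)\geq 0$ and $\eta(s-t)\geq 0$, so, since $\gth>0$, the subtracted quantity $\gth(u(x,s)-v_0(x)+\eta(s-t))$ is nonnegative; hence
\[
u(x,t)-v_0(x)-\gth(u(x,s)-v_0(x)+\eta(s-t))\leq u(x,t)-v_0(x)\leq C_0 .
\]
Taking the supremum over $s\geq t$ gives $w(x,t)\leq C_0$.

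\textbf{Lower bound.} Since the supremum in \erf{eq2} is over $s\geq t$, it is in particular bounded below by the value at $s=t$, namely
\[
w(x,t)\geq u(x,t)-v_0(x)-\gth\bigl(u(x,t)-v_0(x)\bigr)=(1-\gth)\bigl(u(x,t)-v_0(x)\bigr).
\]
Because $\gth>1$ we have $1-\gth<0$, and since $u(x,t)-v_0(x)\leq C_0$ this yields $w(x,t)\geq (1-\gth)C_0=-C_0(\gth-1)$. Combining the two estimates proves the lemma; there is no genuine obstacle here, only the need to track the signs of each term in \erf{eq2}.
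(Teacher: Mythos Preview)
Your proof is correct and follows essentially the same approach as the paper: both obtain the lower bound by evaluating the supremum at $s=t$ and the upper bound by dropping the nonnegative subtracted term $\gth(u(x,s)-v_0(x)+\eta(s-t))$. Your write-up is in fact slightly more explicit about the sign bookkeeping, but the argument is identical.
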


\begin{proof} We just need to note that for all $(x,t)\in\R^n\tim[0,\,\infty)$, 
\[
w(x,t)\geq u(x,t)-v_0(x)-\gth(u(x,t)-v_0(x))=(1-\gth)(u(x,t)-v_0(x))\geq -C_0(\gth-1), 
\]
and 
\[
w(x,t)\leq\max_{s\geq t}(u(x,t)-v(x))\leq C_0. \qedhere
\]
\end{proof}

\begin{proof}[Proof of Proposition \ref{prop:main}] 
Noting that $u\in\Lip(\T^n\tim[0,\,\infty))$ and $v_0\in\Lip(\T^n)$ 
and rewriting $w$ as   
\[
w(x,t)=\max_{r\geq 0}(u(x,t)-v_0(x)-\gth(u(x,r+t)-v_0(x)+\eta r)), 
\]
we deduce that $w\in \Lip(\T^n\tim[0,\,\infty))$.

Fix any $\phi_0\in C^1(Q)$ and $(\x,\t)\in Q$, and assume that 
\[
\max_{Q}(w-\phi_0)=(w-\phi_0)(\x,\t\,).
\] 
We intend to prove that for $R=(2\gth_0+1)L$,   
\begin{equation}\label{2.1.2.1}
\min\{w, \phi_{0,t}-\go_{H,R}(|D\phi_0|)+\psi\}\leq 0 \ \ \ \text{ at } \ (\x,\t). 
\end{equation}


If $w(\x,\t)\leq 0$, then \erf{2.1.2.1} clearly holds.   
We may thus suppose that $w(\x,\t)>0$. We choose an $\s\geq\t$ so that
\[
w(\x,\t)=u(\x,\t)-v_0(\x)-\gth(u(\x,\s)-v_0(\x)+\eta(\s-\t)). 
\]
Observe that for any $s=\t$,  
\[
u(\x,\t)-v_0(\x)-\gth(u(\x,s)-v_0(\x)+\eta(s-\t)
=(1-\gth)(u(\x,\t)-v_0(\x))\leq 0, 
\] 
which guarantees that $\s>\t$.

Define the function $\phi\in C^1(Q\tim(0,\,\infty))$ by
\[
\phi(x,t,s)=\phi_0(x,t)+|x-\x|^2+(t-\t)^2+(s-\s)^2. 
\]
Note that the function 
\[
u(x,t)-v_0(x)-\gth(u(x,s)-v_0(x)+\eta(s-t))-\phi(x,t,s) 
\]
on $Q\tim(0,\,\infty)$ attains a strict maximum at $(\x,\t,\s)$, 
and that $D_x\phi(\x,\t,\s)=D_x\phi_0(\x,\t)$, $\phi_t(\x,\t,\s)=\phi_{0,t}(\x,\t)$ 
and $\phi_s(\x,\t,\s)=0$. 

Now, if $B$ is an open ball of $\R^{3n+2}$ centered at $(\x,\x,\x,\t,\s)$ 
with its closure $\olB$ contained 
in $\R^{3n}\tim(0,\,\infty)^2$, we use the technique of ``tripling variables'' 
and consider the function $\Phi$ on $\olB$ given by 
\begin{align*}
\Phi(x,y,z,t,s)=&
u(x,t)-v_0(z)-\gth(u(y,s)-v_0(z)+\eta(s-t))
\\&-\phi(x,t,s)-\ga(|x-y|^2+|x-z|^2),  
\end{align*}
where $\ga>0$ is a (large) constant.

Let $(x_\ga,y_\ga,z_\ga,t_\ga,s_\ga)\in\olB$ be a maximum point of $\Phi$. 
As usual in viscosity solutions theory, we observe that
\[
\lim_{\ga\to\infty}(x_\ga,y_\ga,z_\ga,t_\ga,s_\ga)=(\x,\x,\x,\t,\s). 
\]
Consequently, if $\ga$ is sufficiently large, then 
\[
(x_\ga,y_\ga,z_\ga,t_\ga,s_\ga)\in B. 
\]
We assume henceforth 
that $\ga$ is sufficiently large so that the above inclusion holds. 

Next, setting 
\[
p_\ga=2(\gth-1)^{-1}\ga(z_\ga-x_\ga) \ \ \ \text{ and } \ \ \ 
q_\ga=2\gth^{-1}\ga(x_\ga-y_\ga), 
\]
and noting that 
\begin{align*}
\Phi(x,y,z,t,s)=&
u(x,t)-\gth u(y,s)+(\gth-1)v_0(z)-\gth\eta(s-t)
\\&-\phi(x,t,s)-\ga(|x-y|^2+|x-z|^2), 
\end{align*}
we observe that  
\begin{align}
p_\ga&\in D^+v_0(z_\ga),\label{2.1.2.2}\\
\left(q_\ga,\,-\gth^{-1}\phi_s(x_\ga,t_\ga,s_\ga)-\eta\right)&\in D^-u(y_\ga,s_\ga),\label{2.1.2.3}\\
\left(D_x\phi(x_\ga,t_\ga,s_\ga)+\gth q_\ga-(\gth-1)p_\ga,\, 
\phi_t(x_\ga,t_\ga,s_\ga)-\gth\eta\right)&\in D^+u(x_\ga,t_\ga),\label{2.1.2.4}
\end{align}
By the definition of $L$, we see as usual in viscosity solutions theory that \ 
$\max\{|p_\ga|,\,|q_\ga|\}\leq L$. 
Sending $\ga\to\infty$ in \eqref{2.1.2.2}--\eqref{2.1.2.4} 
along an appropriate sequence, we find points 
$\p,\,\q\in \olB_L$ such that 
\begin{align}
\p&\in \olD{}^+v_0(\x),\label{2.1.2.5}\\
\left(\q,\,-\gth^{-1}\phi_s(\x,\t,\s)-\eta\right)&\in \olD{}^-u(\x,\s),\label{2.1.2.6}\\
\left(D_x\phi(\x,\t,\s)+\gth \q-(\gth-1)\p,\, 
\phi_t(\x,\t,\s)-\gth\eta\right)&\in \olD{}^+u(\x,\t),\label{2.1.2.7}
\end{align}
where $\olD{}^\pm$ stand for the closures of $D^\pm$, for instance, 
$\olD^+u(\x,\s)$ denotes the set of points $(q,b)\in\R^n\tim\R$ for which there 
are sequences $\{(q_j,b_j)\}_{j}\subset\R^n\tim\R$ and $\{(x_j,s_j)\}_{j}\subset Q$ such that 
$\lim_{j}(q_j,b_j,x_j,s_j)=(q,b,\x,\s)$ and $(q_j,b_j)\in D^+u(x_j,s_j)$ for all $j$. 
Here recall that $\phi_s(\x,\t,\s)=0$, 
$\phi_t(\x,\t,\s)=\phi_{0,t}(\x,\t)$
and $D_x\phi(\x,\t,\s)=D_x\phi_0(\x,\t)$.

From \eqref{2.1.2.5} and \eqref{2.1.2.6}, we get \ 
$H(\x,\p\,)\leq 0$ \ and 
\[
-\eta+H(\x,\q\,)\geq 0. 
\]
By condition (A6)$_{\tny+}$, we get
\begin{equation}\label{2.1.2.8} 
H(\x,\p+\gth(\q-\p\,))\geq \gth\eta+\psi. 
\end{equation}
From \eqref{2.1.2.7}, we get 
\begin{equation}\label{2.1.2.9}
0\geq \phi_{0,t}(\x,\t)-\gth\eta+H(\x,D_x\phi_0(\x,\t)+\gth\q-(\gth-1)\p\,). 
\end{equation}
Noting that \  
$|\hat p+\gth(\hat q-\hat p)|\leq (1+2\gth)L\leq R$ \  
and $|D_x\phi_0(\x,\t)+\gth \q-(\gth-1)\p|\leq L$ because of (\ref{2.1.2.7})
and combining \eqref{2.1.2.9} and \eqref{2.1.2.8}, we get 
\begin{align*}
0&\geq \phi_{0,t}(\x,\t\,)-\gth\eta+H(\x,\p+\gth (\q-\p))-\go_{H,R}(|D_x\phi_0(\x,\t\,)|)
\\& \geq\phi_{0,t}(\x,\t\,)-\go_{H,R}(|D_x\phi_0(\x,\t\,)|)+\psi,
\end{align*}
which shows that \eqref{2.1.2.1} holds. 
\end{proof}

\subsubsection{Completion of the proof of Theorem \ref{thm:main} under (A6)$_{\tny+}$} 
We set
\[
w_\infty(x)=\limsup_{t\to\infty}w(x,t) \ \ \ \text{ for all } \ x\in\R^n. 
\]

\begin{lem} \label{lem:w-infty} We have
\[
w_\infty(x)\leq 0 \ \ \ \text{ for all } \ x\in\R^n.
\]
Moreover, the convergence 
\begin{equation}\label{2.1.3.1}
\lim_{t\to\infty}\max\{w(x,t),\,0\}=0  
\end{equation}
is uniform in $x\in\R^n$.   
\end{lem}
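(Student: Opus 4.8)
The plan is to exploit Proposition \ref{prop:main} together with the two-sided bound on $w$ from Lemma \ref{lem-bounds} and a simple ODE-type argument applied to $m(t):=\max_{x\in\R^n}w(x,t)$. First I would record that $m$ is well-defined, finite, and Lipschitz in $t$: indeed $w\in\Lip(\T^n\tim[0,\infty))$, so $m$ inherits Lipschitz continuity, and $-C_0(\gth-1)\le m(t)\le C_0$ for all $t$. The key claim is that $m$ satisfies, in the viscosity sense on $(0,\infty)$, the differential inequality
\[
\min\{m(t),\,m'(t)+\psi\}\le 0.
\]
To see this, fix $t_0>0$ and a point $\x$ where $w(\cdot,t_0)$ attains its maximum $m(t_0)$. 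If $m(t_0)\le 0$ the inequality is trivial; if $m(t_0)>0$, then whenever a $C^1$ function $\chi(t)$ touches $m$ from above at $t_0$, the function $(x,t)\mapsto \chi(t)+|x-\x|^2$ touches $w$ from above at $(\x,t_0)$ — here the spatial penalty $|x-\x|^2$ kills the $\go_{H,R}(|D_x\phi|)$ term since its gradient vanishes at $\x$ — so Proposition \ref{prop:main} forces $\chi'(t_0)+\psi=\phi_{0,t}(\x,t_0)+\psi\le 0$, i.e. $m'(t_0)+\psi\le 0$ in the viscosity sense. (One should be slightly careful that the maximum point $\x$ may vary with $t$; the clean way is to test $m$ with a $C^1$ supertangent and transfer it, exactly as above.)

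Next I would upgrade this differential inequality to the conclusion. Since $\psi>0$, the inequality $\min\{m,m'+\psi\}\le 0$ says that at any time where $m$ is positive it must be strictly decreasing with slope $\le-\psi$; since $m$ is bounded below, $m$ cannot stay positive forever, and in fact $\limsup_{t\to\infty}m(t)\le 0$. Rigorously: suppose $\limsup_{t\to\infty}m(t)=\ell>0$. Pick any $\gep\in(0,\ell)$; there are arbitrarily large times with $m(t)>\ell-\gep>0$, and by the viscosity inequality $m$ is nonincreasing with slope at most $-\psi$ on any interval where it exceeds $\ell-\gep$, so once $m$ drops to $\ell-\gep$ it cannot return above it — contradicting $\limsup m=\ell$ unless we argue more carefully near where $m$ wiggles around $\ell-\gep$. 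The honest argument: whenever $m(t)>0$, viscosity-$m'\le-\psi$ holds, hence $t\mapsto m(t)+\psi t$ is nonincreasing on $\{m>0\}$, and combined with the lower bound $m\ge -C_0(\gth-1)$ this is only possible if $m(t)\le 0$ for all $t$ large; I would make this precise by the standard comparison of a viscosity subsolution of $m'\le -\psi\1_{\{m>0\}}$ with the explicit supersolution $C_0-\psi(t-t_1)$ started from any time $t_1$, letting $t_1\to\infty$. This yields $w_\infty(x)=\limsup_{t\to\infty}w(x,t)\le \limsup_{t\to\infty}m(t)\le 0$ for every $x$.

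Finally, for the uniform convergence in \eqref{2.1.3.1}, I would note that $\max\{w(x,t),0\}\le \max\{m(t),0\}$, the right-hand side is independent of $x$, and we have just shown $\limsup_{t\to\infty}m(t)\le 0$, hence $\max\{m(t),0\}\to 0$; uniformity in $x$ is then automatic. The main obstacle I anticipate is the careful handling of the differential inequality for $m$ — specifically, justifying the passage from the PDE inequality for $w$ at a maximizing point to a bona fide viscosity inequality for $m$, and then the (elementary but easy-to-botch) comparison argument that turns $\min\{m,m'+\psi\}\le 0$ with $m$ bounded below into $\limsup m\le 0$. Everything else is bookkeeping with the bounds from Lemma \ref{lem-bounds}.
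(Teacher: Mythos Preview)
Your approach is correct and genuinely different from the paper's. The paper argues by contradiction: assuming $w(x_j,t_j)\ge\gd$ along a sequence $t_j\to\infty$, it uses Ascoli--Arzela to extract a locally uniform limit $f(x,t)=\lim_j w(x,t+t_j)$ on all of $\R^{n+1}$, invokes stability of viscosity subsolutions to get the same inequality \eqref{eq3} for $f$, and then tests $f$ with $\gep t^2$ to reach a contradiction. You instead reduce to the scalar function $m(t)=\max_x w(x,t)$, transfer the subsolution property of $w$ to the viscosity inequality $\min\{m,m'+\psi\}\le 0$, and finish with an ODE argument. Your route is more elementary---no compactness, no limiting PDE---and in fact yields the quantitative bonus that $m(t)\le 0$ for all $t$ past an explicit time, whereas the paper's argument is closer in spirit to half-relaxed-limit techniques and transplants more readily to situations where taking $\max_x$ is awkward. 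The paper itself notes your variant as an option (``one may look for a variational inequality for $m(t):=\max_{x}w(x,t)$'').

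Your one soft spot is the final ODE step, which you flag but then slightly muddle: the lower bound $m\ge -C_0(\gth-1)$ is irrelevant, and the ``comparison with $C_0-\psi(t-t_1)$, letting $t_1\to\infty$'' is not quite the right formulation since $m'\le-\psi$ only holds on $\{m>0\}$. The clean way is this: on each connected component $(a,b)$ of the open set $\{m>0\}$ the viscosity inequality $m'\le-\psi$ forces $t\mapsto m(t)+\psi t$ to be nonincreasing there. If $a>0$ then continuity gives $m(a)=0$, whence $m(t)\le \psi(a-t)<0$ on $(a,b)$, a contradiction; and if $b=\infty$ then $m(t)\le m(a+1)-\psi(t-a-1)\to-\infty$, again impossible. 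Hence $\{m>0\}$ is either empty or a single bounded interval starting at $0$, so $m(t)\le 0$ for all large $t$, and your uniform-in-$x$ conclusion follows exactly as you wrote.
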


\begin{proof} It is sufficient to prove that the convergence \eqref{2.1.3.1} holds uniformly 
in $x\in\R^n$.  Contrary to this, we suppose that there is a 
sequence $(x_j,t_j)\in Q$ such that $\lim_{j\to\infty}t_j=\infty$ and 
$w(x_j,t_j)\geq \gd$ for all $j\in\N$ and some constant $\gd>0$. In view of the periodicity 
of $w$, we may assume that $\lim_{j\to\infty}x_j=y$ for some $y\in\R^n$.  
Moreover, in view of the Ascoli-Arzela theorem, 
we may assume by passing to a subsequence of $\{(x_j,t_j)\}$
if needed that 
\begin{align*}
&\lim_{j\to\infty} w(x,t+t_j)=f(x,t) \ \ \ \text{ locally uniformly in } \ \R^n\times (-\infty,+\infty), 
\end{align*}
for some bounded function $f\in\Lip(\T^n\tim\R)$. 

Now, note that $f(y,0)\geq \gd$. By the stability of the subsolution property 
under uniform 
convergence, we see that $f$ is a subsolution of 
\begin{equation}\label{2.1.3.2}
\min\{f(x,t),\,f_t(x,t)-\go_{H,R}(|D_x f(x,t)|)+\psi\}\leq 0 \ \ \ \text{ in } \ 
\R^{n+1}. 
\end{equation} 
Since $f\in C(\T^n\tim\R)$ and $f$ is bounded on $\R^{n+1}$, for every  
$\gep>0$ the function $f(x,t)-\gep t^2$ attains a maximum over $\R^{n+1}$ 
at a point $(x_\gep,t_\gep)$. Observe as usual in the viscosity solutions theory 
that 
\[
f(x_\gep,t_\gep)-\gep t_\gep^2 \geq f(y,0)\geq\gd,
\]
and therefore 
\[
f(x_\gep,t_\gep)\geq\gd \ \ \ \text{ and } \ \ \ \gep |t_\gep|\leq (\gep\|f\|_\infty)^{1/2}. 
\] 
In particular, we have $\lim_{\gep\to 0+}\gep t_\gep=0$. 
In view of inequality \erf{2.1.3.2}, we get 
\[
2\gep t_\gep-\go_{H,R}(0)+\psi\leq 0, 
\]
which, in the limit as $\gep\to 0+$, yields $\psi\leq 0$, a contradiction. 
This shows that the uniform convergence \erf{2.1.3.1} holds.    
\end{proof}

\begin{proof}[Proof of Theorem \ref{thm:main} under (A6)$_{\tny+}$]  
Let $w$ be the function defined by \erf{eq2}, 
with arbitary $(\eta,\gth)\in(0,\eta_0)\tim(0,\gth_0)$. 

Fix any $\gep>0$. Thanks to \erf{2.1.3.1}, 
we may choose a constant $T_\gep\equiv T_{\gep,\eta,\gth}>0$ so that
for any $t\geq T_\gep$, 
\[
w(x,t)\leq \gep \ \ \ \text{ for all } \ x\in\R^n. 
\]
Let $t\geq T_\gep$ and $x\in\R^n$. From the above, for any $s\geq t$, we have
\begin{align*}
u(x,t)-v_0(x)\leq\aln \gep+\gth(u(x,s)-v_0(x))+\gth\eta(s-t)\\
=\aln\gep+u(x,s)-v_0(x)+(\gth-1)(u(x,s)-v_0(x))+\gth\eta(s-t)\\
\leq\aln \gep+u(x,s)-v_0(x)+(\gth-1)C_0+\gth\eta(s-t).  
\end{align*}
Thus, for any $0\leq s\leq 1$, we have  
\begin{equation}\label{2.1.3.3}
u(x,t)\leq u(x,t+s)+(\gth-1)C_0+\gth\eta+\gep. 
\end{equation} 

Now, since $u$ is bounded and Lipschitz continuous in $\olQ$, in view of the Ascoli-Arzela  theorem, 
we may choose a sequence $\tau_j\to\infty$ and a bounded 
function $z\in\Lip(\T^n \times (-\infty,+\infty))$ so that
\begin{equation}\label{2.1.3.4}
\lim_{j\to\infty} u(x,t+\tau_j)=z(x,t) \ \ \ \text{ locally uniformly on } \ \R^{n+1}.  
\end{equation}
By \erf{2.1.3.3} we get 
\[
z(x,t)\leq z(x,t+s) +(\gth-1)C_0+\gth\eta+\gep \ \ \ \text{ for all } \
(x,t,s)\in\R^{n+1}\tim[0,\,1]. 
\]
This is valid for all $(\eta,\gth)\in(0,\eta_0)\tim(1,\gth_0)$. Hence,  
\[
z(x,t)\leq z(x,t+s)+\gep \ \ \ \text{ for all } \ (x,t,s)\in\R^{n+1}\tim[0,\,1],
\]
and moreover 
\begin{equation}\label{asymp.mon}
z(x,t)\leq z(x,t+s) \ \ \ \text{ for all } \ (x,t,s)\in\R^{n+1}\tim[0,\,1].  
\end{equation}
Thus we find that the function $z(x,t)$ is nondecreasing in $t\in \R$ for all $x\in\R^n$. 
From this, we conclude that 
\begin{equation}\label{2.1.3.5}
\lim_{t\to\infty}z(x,t)=u_\infty(x) \ \ \ \text{ uniformly on } \ \R^n 
\end{equation}
for some function $u_\infty\in\Lip(\T^n)$.

Fix any $\gd>0$. By \erf{2.1.3.5} there is a constant $\tau>0$ such that  
\[
\|z(\cdot,\tau)-u_\infty\|_{L^\infty(\R^n)}<\gd,
\]
Then, by \erf{2.1.3.4} there is a $j\in\N$ such that 
\[
\|z(\cdot,\tau)-u(\cdot,\tau+\tau_j)\|_\infty<\gd. 
\]
Hence,
\[
\|u(\cdot,\tau+\tau_j)-u_\infty\|_\infty<2\gd. 
\]
By the contraction property (Theorem \ref{thm:comp}),  
we see that for any $t\geq\tau+\tau_j$, 
\[
\|u(\cdot,t)-u_\infty\|_\infty\leq \|u(\cdot,\tau+\tau_j)-u_\infty\|_\infty<2\gd,
\]
which completes the proof.  
\end{proof}

\subsection{Under assumption (A6)$_{\tny-}$}

In addition to (A1)--(A5), we assume
throughout this subsection that (A6)$_{\tny-}$ holds. 
 
To accommodate the previous $w$ to (A6)$_{\tny-}$, we modify and replace it by the new 
function, which we denote by the same symbol, given by  
\[
w(x,t)=\max_{0\leq s\leq t}(u(x,t)-v_0(x)-\gth(u(x,s)-v_0(x)-\eta(s-t)),  
\]
where $(\eta,\gth)$ is chosen arbitrarily in $(0,\,\eta_0)\tim(1,\,\gth_0)$ and 
the constants $\eta_0$ and $\gth_0$ are those from (A6)$_{\tny-}$. 

\begin{lem} \label{w-bound} We have
\[
-C_0(\gth-1)\leq w(x,t)\leq C_0 \ \ \ \text{ for all } \ (x,t)\in \olQ.
\]
\end{lem}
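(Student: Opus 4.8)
The plan is to argue exactly as in the proof of Lemma~\ref{lem-bounds}, using only the normalization $0\le u(x,t)-v_0(x)\le C_0$ on $\olQ$ together with the signs of the terms appearing in the new definition of $w$. No limiting arguments or viscosity machinery are needed; this is a purely elementary estimate.

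For the lower bound, I would simply retain the single value $s=t$ in the maximum defining $w$. At $s=t$ the quantity $\eta(s-t)$ vanishes, so the corresponding term equals $(1-\gth)(u(x,t)-v_0(x))$; since $\gth>1$ and $u(x,t)-v_0(x)\in[0,C_0]$, this is at least $-(\gth-1)C_0$. Hence $w(x,t)\ge -C_0(\gth-1)$.

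For the upper bound, I would observe that for every $s$ with $0\le s\le t$ one has
\[
u(x,s)-v_0(x)-\eta(s-t)=\big(u(x,s)-v_0(x)\big)+\eta(t-s)\ge 0,
\]
because both summands are nonnegative (the first by the normalization, the second because $s\le t$ and $\eta>0$). Multiplying by $-\gth<0$ and adding $u(x,t)-v_0(x)\le C_0$ gives
\[
u(x,t)-v_0(x)-\gth\big(u(x,s)-v_0(x)-\eta(s-t)\big)\le u(x,t)-v_0(x)\le C_0,
\]
and taking the maximum over $0\le s\le t$ yields $w(x,t)\le C_0$.

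I do not expect any real obstacle here. The only point to watch, relative to Lemma~\ref{lem-bounds}, is that the sign of $\eta(s-t)$ is reversed (here $s\le t$, whereas there $s\ge t$); this is precisely compensated by the sign change $\eta\mapsto-\eta$ built into the new definition of $w$ under \emph{(A6)$_{\tny-}$}, so both estimates go through verbatim.
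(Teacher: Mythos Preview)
Your proposal is correct and follows essentially the same argument as the paper's own proof: the lower bound comes from evaluating the maximum at $s=t$, and the upper bound comes from dropping the nonnegative term $\gth\big(u(x,s)-v_0(x)+\eta(t-s)\big)$. The only difference is that you spell out explicitly why this term is nonnegative, whereas the paper compresses the upper bound into a single displayed inequality.
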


\begin{proof} Recall that $0\leq u(x,t)-v_0(x)\leq C_0$ for all 
$(x,t)\in\olQ$, and note that for all $(x,t)\in\olQ$, 
\[
w(x,t)\geq u(x,t)-v_0(x)-\gth(u(x,t)-v_0(x))=(1-\gth)(u(x,t)-v_0(x))\geq -C_0(\gth-1) 
\]
and 
\[
w(x,t)\leq\max_{
0\leq s\leq t}
(u(x,t)-v_0(x))\leq C_0. \qedhere
\]
\end{proof}

We have the following proposition similar to Proposition \ref{prop:main}.

\begin{prop}\label{prop:minus}  
The function $w$ is a subsolution of 
\begin{equation}\label{2.2.1}
\min\{w(x,t),w_t(x,t)-\go_{H,R}(|D_x w(x,t)|)+\psi\}\leq 0 \ \ \ \text{ in } \ 
(x,t)\in\R^n\tim(T,\,\infty), 
\end{equation}
where $\psi=\psi(\gth,\eta)>0$ is the constant from \emph{(A6)$_{\tny-}$}, 
$T:=C_0/\eta$ and $R:=(2\gth_0+1)L$.  
\end{prop}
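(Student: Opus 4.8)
The plan is to follow the proof of Proposition~\ref{prop:main} almost line for line, the only differences being the signs produced by the term $-\eta(s-t)$ in place of $+\eta(s-t)$ and, more importantly, a localization step in the $s$-variable that is responsible for the restriction to $\R^n\tim(T,\,\infty)$ with $T:=C_0/\eta$. First I would check that $w\in\Lip(\T^n\tim[0,\,\infty))$: writing $s=\sigma t$ exhibits $w(x,t)=\max_{0\le\sigma\le 1}g(x,t,\sigma)$ with $g(x,t,\sigma)=u(x,t)-\gth u(x,\sigma t)+(\gth-1)v_0(x)+\gth\eta(\sigma t-t)$, and $g$ is Lipschitz in $(x,t)$ uniformly in $\sigma\in[0,1]$ because $u\in\Lip(\T^n\tim[0,\,\infty))$ and $|\sigma|\le 1$; hence so is the maximum $w$, which is also $\Z^n$-periodic and bounded by Lemma~\ref{w-bound}. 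As in Proposition~\ref{prop:main}, it then suffices to fix $\phi_0\in C^1(\R^n\tim(T,\,\infty))$ and a point $(\x,\t)$ with $\max(w-\phi_0)=(w-\phi_0)(\x,\t)$ over $\R^n\tim(T,\,\infty)$, to assume $w(\x,\t)>0$ (otherwise \eqref{2.2.1} is trivial at $(\x,\t)$), and to pick $\s\in[0,\t]$ with $w(\x,\t)=u(\x,\t)-v_0(\x)-\gth(u(\x,\s)-v_0(\x)-\eta(\s-\t))$.

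The localization claim is that $0<\s<\t$. If $\s=\t$ then $w(\x,\t)=(1-\gth)(u(\x,\t)-v_0(\x))\le 0$, contradicting $w(\x,\t)>0$. If $\s=0$ then, using $0\le u(\x,\t)-v_0(\x)\le C_0$ and $u(\x,0)-v_0(\x)\ge 0$,
\[
w(\x,\t)=(u(\x,\t)-v_0(\x))-\gth(u(\x,0)-v_0(\x))-\gth\eta\t\le C_0-\gth\eta\t<C_0-\gth\eta T=C_0(1-\gth)\le 0,
\]
since $\t>T=C_0/\eta$ and $\gth>1$, again a contradiction. Hence $\s$ is interior in $s$, so a sufficiently small ball around $(\x,\x,\x,\t,\s)$ stays in $\{0<s<t,\ t>T\}$, and the ``tripling of variables'' apparatus of Proposition~\ref{prop:main} can be reused verbatim; this is precisely why \eqref{2.2.1} is only claimed on $\R^n\tim(T,\,\infty)$ rather than on all of $Q$.

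With $0<\s<\t$ in hand, the rest is the algebra of the formal computation preceding the proof of Proposition~\ref{prop:main}, with the obvious sign adjustments. One introduces $\phi(x,t,s)=\phi_0(x,t)+|x-\x|^2+(t-\t)^2+(s-\s)^2$ and then $\Phi(x,y,z,t,s)=u(x,t)-\gth u(y,s)+(\gth-1)v_0(z)+\gth\eta(s-t)-\phi(x,t,s)-\ga(|x-y|^2+|x-z|^2)$ on that small ball; with a maximum point $(x_\ga,y_\ga,z_\ga,t_\ga,s_\ga)\to(\x,\x,\x,\t,\s)$ and $p_\ga=2(\gth-1)^{-1}\ga(z_\ga-x_\ga)$, $q_\ga=2\gth^{-1}\ga(x_\ga-y_\ga)$, one reads off the analogues of \eqref{2.1.2.2}--\eqref{2.1.2.4}, namely $p_\ga\in D^+v_0(z_\ga)$, $(q_\ga,\,\eta-\gth^{-1}\phi_s(x_\ga,t_\ga,s_\ga))\in D^-u(y_\ga,s_\ga)$ and $(D_x\phi(x_\ga,t_\ga,s_\ga)+\gth q_\ga-(\gth-1)p_\ga,\,\phi_t(x_\ga,t_\ga,s_\ga)+\gth\eta)\in D^+u(x_\ga,t_\ga)$ --- i.e.\ only the signs of the $\eta$- and $\gth\eta$-terms change. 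Letting $\ga\to\infty$ along a subsequence (so $\max\{|p_\ga|,|q_\ga|\}\le L$ yields limits $\p,\q\in\olB_L$) and recalling $\phi_s(\x,\t,\s)=0$, $\phi_t(\x,\t,\s)=\phi_{0,t}(\x,\t)$, $D_x\phi(\x,\t,\s)=D_x\phi_0(\x,\t)$, one gets $H(\x,\p)\le 0$ and $\eta+H(\x,\q)\ge 0$, hence by (A6)$_{\tny-}$ that $H(\x,\p+\gth(\q-\p))\ge-\gth\eta+\psi$; and the subsolution inequality for $u$ at $(\x,\t)$, applied to the superdifferential element $(D_x\phi_0(\x,\t)+\gth\q-(\gth-1)\p,\phi_{0,t}(\x,\t)+\gth\eta)$, reads
\[
0\ge\phi_{0,t}(\x,\t)+\gth\eta+H(\x,D_x\phi_0(\x,\t)+\p+\gth(\q-\p)).
\]
Since $|\p+\gth(\q-\p)|\le(1+2\gth_0)L=R$ and $|D_x\phi_0(\x,\t)+\gth\q-(\gth-1)\p|\le L\le R$ (the latter because this vector lies in $\olD^+u(\x,\t)$ and $\|D_xu\|_\infty\le L$), we have $H(\x,D_x\phi_0(\x,\t)+\p+\gth(\q-\p))\ge H(\x,\p+\gth(\q-\p))-\go_{H,R}(|D_x\phi_0(\x,\t)|)\ge-\gth\eta+\psi-\go_{H,R}(|D_x\phi_0(\x,\t)|)$, and substituting this in gives $0\ge\phi_{0,t}(\x,\t)-\go_{H,R}(|D_x\phi_0(\x,\t)|)+\psi$, which is \eqref{2.2.1} at $(\x,\t)$.

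I expect the only genuinely new obstacle to be the localization step of the second paragraph: making sure the maximizing $\s$ lies strictly between $0$ and $\t$. This is where the threshold $T=C_0/\eta$ is actually needed (the case $\s=0$ is ruled out only for $\t>T$), and hence why the conclusion holds on $\R^n\tim(T,\,\infty)$ rather than on all of $Q$. Everything else --- the Lipschitz regularity of $w$, the triple-variable limit, and the bookkeeping of slopes and of the modulus $\go_{H,R}$ --- is a routine transcription of the already-completed proof of Proposition~\ref{prop:main}.
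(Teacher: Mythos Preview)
Your proof is correct and follows the paper's own argument essentially line for line. The only deviations are cosmetic: you establish $w\in\Lip(\T^n\tim[0,\,\infty))$ via the parametrization $s=\sigma t$, whereas the paper restricts the range of $s$ to $[t-T,\,t]$ using the estimate \eqref{2.2.2} and thereby obtains Lipschitz regularity only on $\R^n\tim[T,\,\infty)$; and you rule out $\s=0$ by a single direct inequality, while the paper derives the stronger localization $\s\geq \t-T>0$ from that same estimate \eqref{2.2.2}. Neither variation affects the structure of the argument.
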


Since the proof of the above proposition is very similar to that of 
Proposition \ref{prop:main}, we present just an outline of it. 

\begin{proof}[Outline of proof] Note that for any $(x,t)\in\R^n\tim(T,\,\infty)$ and 
$s\in[0,\,t-T)$, 
\begin{equation}\label{2.2.2}  
\begin{aligned}
u(x,t)-v_0(x)-\gth(u(x,s)-v_0(x)-\eta(s-t))
&\leq C_0-\gth\eta(t-s)
\\&<C_0-\gth\eta T=-(\gth-1)C_0. 
\end{aligned}
\end{equation}
Hence, in view of Lemma \ref{w-bound}, for any $(x,t)\in\R^n\tim(T,\,\infty)$ we have 
\begin{align*}
w(x,t)&=\max_{t-T\leq s\leq t}[u(x,t)-v_0(x)-\gth(u(x,s)-v_0(x)-\eta(s-t))]
\\&=\max_{-T\leq s\leq 0}[u(x,t)-v_0(x)-\gth(u(x,s+t)-v_0(x)-\eta s)]. 
\end{align*}
From this latter expression of $w$, 
as the functions $u$ and $v_0$ are Lipschitz continuous in $\olQ$ and $\R^n$, respectively, 
we see that $w$ is Lipschitz continuous in $\R^n\tim[T,\,\infty)$. 
Also, from \erf{2.2.2} we see that for any $(x,t)\in\R^n\tim(T,\,\infty)$, if 
\[
w(x,t)=u(x,t)-v_0(x)-\gth(u(x,s)-v_0(x)-\eta(s-t))
\]
for some $0\leq s\leq t$, then $s\geq t-T>0$. 

To see that \erf{2.2.1} holds, we fix any test function $\phi_0\in C^1(\R^n\tim(T,\,\infty))$ 
and assume that $w-\phi_0$ attains a strict maximum at a point $(\x,\t)$. 

Following the same arguments as in the proof 
under (A6)$_{\tny+}$, we are led to the inclusions 
\begin{equation}\label{2.2.4}\left\{
\begin{aligned}  
&\p\in \olD{}^+v_0(\x),
\\&\left(\q,\,\eta\right)\in \olD{}^-u(\x,\s),
\\&\left(D_x\phi_0(\x,\t)+\gth \q-(\gth-1)\p,\, 
\phi_{0,t}(\x,\t)+\gth\eta\right)\in \olD{}^+u(\x,\t)
\end{aligned}\right.
\end{equation}
for some $\hat p,\hat q\in\R^n$.

Using \eqref{2.2.4}, we observe that \ 
$H(\x,\p\,)\leq 0$ \ and \ $\eta+H(\x,\q\,)\geq 0$. 
Hence, by condition (A6)$_{\tny-}$, we get
\[ 
H(\x,\p+\gth(\q-\p\,))\geq -\gth\eta+\psi. 
\]
Moreover, we compute that  
\begin{align*}
0&\geq \phi_{t}(\x,\t,\s)+\gth\eta+H(\x,D_x\phi(\x,\t,\s)+\gth\q-(\gth-1)\p\,)
\\&\geq \phi_{0,t}(\x,\t)+\gth\eta
-\go_{H,R}(|D_x\phi_0(\x,\t)|)
+H(\x,\gth\q-(\gth-1)\p\,)
\\&\geq \phi_{0,t}(\x,\t)
-\go_{H,R}(|D_x\phi_0(\x,\t)|)
+\psi.
\end{align*}
Note that, as above, 
$|\p+\gth(\q-\p\,)|\leq R$ and 
\ $|D_x\phi_0(\x,\t)+\gth \q-(\gth-1)\p|\leq L. $
This completes the proof. 
\end{proof}

\begin{proof}[Outline of proof of Theorem \ref{thm:main} under (A6)$_{\tny-}$] 
%

Using Proposition \ref{prop:minus} 
and arguing as the proof of Lemma \ref{lem:w-infty}, we deduce that 
\[
\lim_{t\to\infty}\max\{w(x,t),0\}=0 
\ \ \ \text{ uniformly in } \ \R^n. 
\] 
We fix any $\gep>0$ and choose a constant $T_\gep\equiv T_{\gep,\eta,\gth}>T$ 
so that for any $t\geq T_\gep$, 
\[
w(x,t)\leq \gep \ \ \ \text{ for all } \ x\in\R^n. 
\]
Let $t\geq T_\gep$ and $x\in\R^n$. For any $0\leq s\leq t$, we have
\begin{align*}
u(x,t)-v_0(x)\leq\gep+u(x,s)-v_0(x)+(\gth-1)C_0+\gth\eta(t-s).  
\end{align*}
We may assume that $T_\gep>1$, and from the above, for any $0\leq s\leq 1$, we have  
\begin{equation}\label{2.2.5}
u(x,t)\leq u(x,t-s)+(\gth-1)C_0+\gth\eta+\gep. 
\end{equation}

Since $u\in\Lip(\T^n\tim(0,\,\infty))$ and it is bounded in $\olQ$, the Ascoli-Arzela theorem 
assures that there is a sequence $\{\tau_j\}_{j\in\N}\subset (0,\,\infty)$ 
diverging to infinity such that for some function 
$z\in\Lip(\T^n\tim \R)$,  
\[
\lim u(x,t+\tau_j)=z(x,t) \ \ \ \text{ locally uniformly in } \ \R^{n+1}.
\] 
We see immediately from \erf{2.2.5} that the function $z(x,t)$ is 
nonincreasing in $t$ for every $x$. Furthermore, we infer that for some 
function $u_\infty\in C(\T^n)$,  
\[
\lim_{t\to\infty}z(x,t)=u_\infty(x) \ \ \ \text{ uniformly in }\  \R^n.  
\]
As exactly under (A6)$_{\tny+}$, we deduce from this that  
\[
\lim_{t\to\infty} u(x,t)=u_\infty(x) \ \ \ \text{ uniformly in } \ \R^n, 
\] 
which completes the proof. 
\end{proof}

\section{Conditions (A6)$_{\tny\pm}$}

First of all we restate the conditions (A6)$_{\tny\pm}$ in \cite{BIM} as (A)$_{\tny\pm}$:  

\begin{itemize}
\item[{\rm(A)$_{\tny+}$}] 
There exists $\eta_{0}>0$ such that, 
for any $\eta\in(0,\eta_{0})$, 
there exists $\nu=\nu(\eta)>0$ such that for all $x,p,q\in\R^n$ and $\gth>1$,  
if $H(x,q)\ge\eta$ and $H(x,p)\le0$, 
then 
\[
H(x,p+\gth(q-p))\ge 
\gth H(x,q)+\nu(\gth-1).  
\]
\item[{\rm(A)$_{\tny-}$}] 
There exists $\eta_{0}>0$ such that, 
for any $\eta\in(0,\eta_{0})$,
there exists $\nu=\nu(\eta)>0$ such that for all $x,p,q\in\R^{n}$ and $\gl\in[0,\,1]$, 
if $H(x,q)\le-\eta$ and $H(x,p)\le0$, then  
\[
H(x,(1-\gl)p+\gl q)\le 
\gl H(x,q)-\nu\gl(1-\gl). 
\]
\end{itemize}

Conditions (A6)$_{\tny\pm}$ and (A)$_{\tny\pm}$ 
can be considered as a sort of strict convexity requirements on the function 
$H(x,p)$ in $p$ near the points where $H$ vanishes ((A6)$_{\tny+}$ and (A)$_{\tny+}$ 
are the ones for those points $(x,p)$ where $H(x,p)\geq 0$, while (A6)$_{\tny-}$ and (A)$_{\tny-}$ 
are for those points where $H\leq 0$).  

The condition (H4) in \cite{BS} 
has a general feature more  
than (A)$_{\tny+}$ above, and its additional generality is in the point that includes the key 
assumption in Namah-Roquejoffre \cite{NR}. If we push this point aside, then the condition 
(H4) in \cite{BS} is same as (A)$_{\tny+}$ above. 
 
Now, we give comparison between (A6)$_{\tny+}$ and (A)$_{\tny+}$. 
Let $\eta_0$, $\gth_0$ and $\psi(\eta,\gth)$ be the positive constants from (A6)$_{\tny+}$. 
Note that the key inequality in (A6)$_{\tny+}$ holds with 
$\psi(\eta,\gth)$ replaced by $\min\{\psi(\eta,\gth),\, 1\}$. 
Thus, the behavior of the function $H$ where the value of $H$ is large 
(larger than $\eta_0\gth_0 +1$), 
is irrelevant to condition (A6)$_{\tny+}$, while (A)$_{\tny+}$ requires a certain growth 
of the function $H$ where its value is positive. The function $H$ on $\R^n$ (see Fig. 1 below)
given by  
\[
H(p)=\max\{\min\{|p|^2,\,1\},\,|p|^2/4\}
\]
satisfies (A2)--(A5) and (A6)$_{\tny+}$, as is easily checked. 
However, if $p=0$, $|q|=1$ and $1<\gth<2$, then 
we have 
\[
H(p+\gth(q-p))=H(\gth q)=1<\gth=\gth H(q). 
\]
Therefore, (A)$_{\tny+}$ does not hold with this Hamiltonian $H(x,p)=H(p)$.  
\begin{center}
\includegraphics[height=58mm]{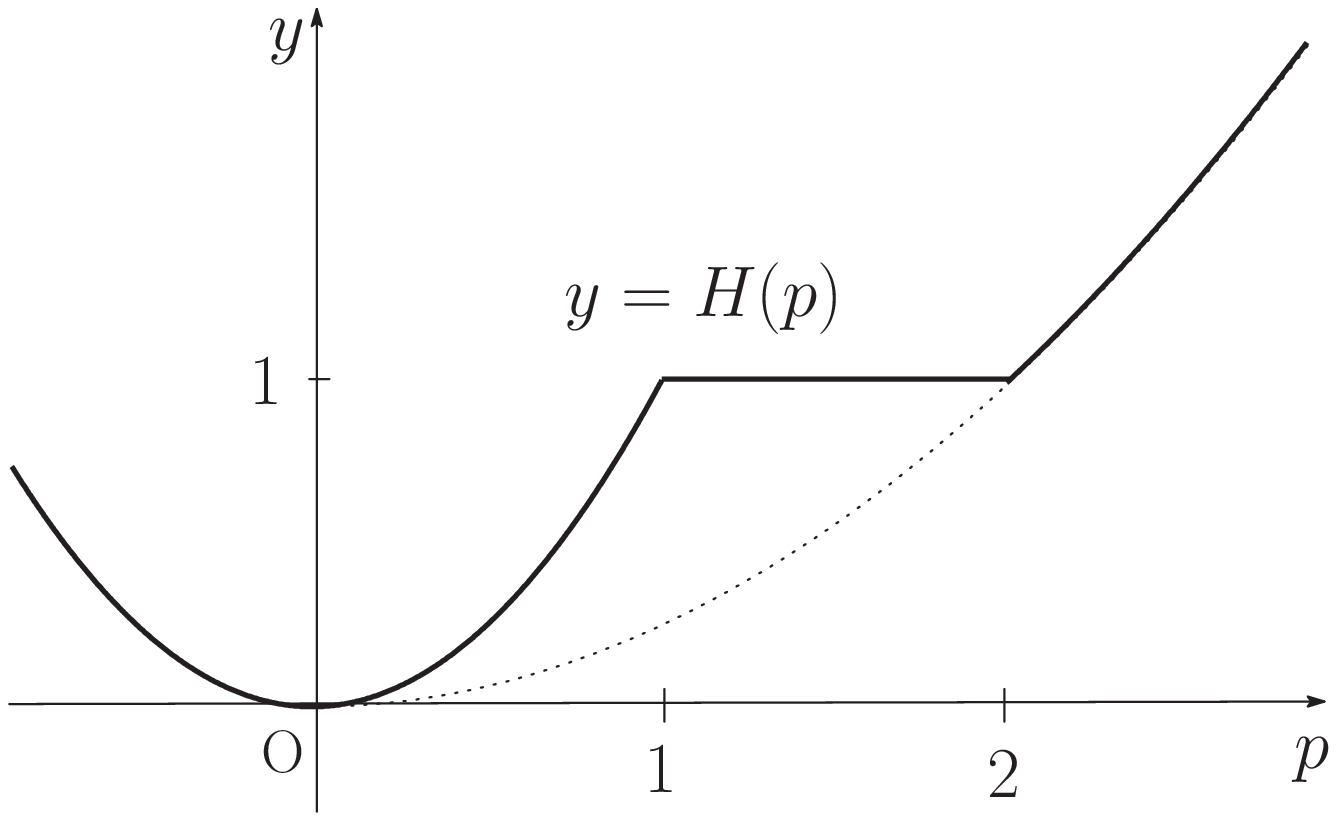} \\\vskip-20pt
{\small Fig. 1}
\end{center} 

The difference of two conditions observed above  
is concerned with the behavior of the Hamiltonian 
$H(x,p)$ where $H$ is large.     

The following example shows that (A)$_{\tny+}$ is a stronger requirement 
on $H$ than (A6)$_{\tny+}$ even in a neighborhood of the points $(x,p)$ 
where $H$ vanishes. In this regard, the difference between two conditions is that 
the term $\psi(\eta,\gth)$ in (A6)$_{\tny+}$ 
depends generally on $\eta,\,\gth$ while the 
term $\nu(\eta)(\gth-1)$ in (A)$_{\tny+}$ depends linearly in $\gth-1$.   

We define the function $H_0$ (see Fig. 2 below) and $H$ in $C(\R)$ by  
\[
H_0(p)=
\begin{cases}
0 & \text{ for } \ p\leq 0,\\
p+(p-1)^2 &\text{ if } \ p\geq 1, \\
p/2+2(p-1/2)^2 &\text{ if } \ 1/2\leq p\leq 1, \\
\qquad\vdots & \qquad\vdots \\
p/2^{j+1}+2^{j+1}\left(p-1/2^{j+1}\right)^2&\text{ if } \ 1/2^{j+1}\leq p\leq 1/2^j, \\
\qquad\vdots & \qquad \vdots
\end{cases}
\] 
and 
\[
H(p)=|p+1|-1+H_0(p)+H_0(-p-1)
\]

\begin{center}
\includegraphics[height=58mm]{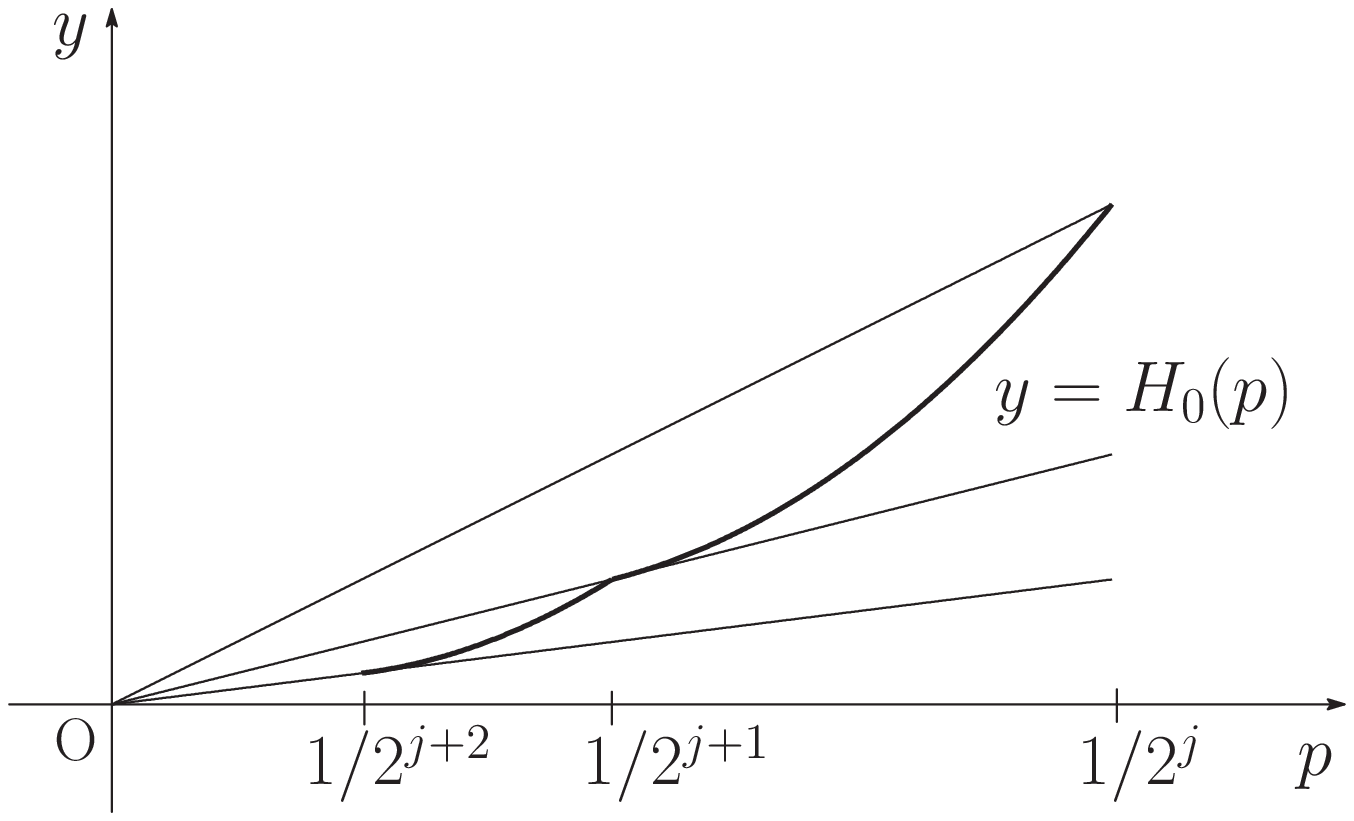}\\ \vskip-16pt
{\small Fig. 2}
\end{center}

This Hamiltonian $H$ satisfies (A2)--(A4), and the problem 
\[
H(u'(x))=0 \ \ \ \ \text{ in } \ \R \qquad\text{ and }\qquad u\in C(\T),
\]
where $u'=\d u/\d x$, has a solution $u(x)\equiv 0$. Thus, (A5) is satisfied with our function $H$.  
Moreover, it is easily seen that $H$ satisfies (A6)$_{\tny +}$.  
However, $H$ does not satisfy condition (A)$_{\tny+}$.  
To check this, fix any $j\in\N$ and choose $p=0$ and $q=1/2^{j+1}$. Note that 
\[
H(q)=\fr{1}{2^{j+1}}+\fr 1{2^{2j+2}}, 
\] 
and that for any $\gth\in(1,\,2)$, we have $1/2^{j+1}<\gth q<1/2^j$ and 
\[
H(\gth q)=\fr{\gth}{2^{j+1}}+\fr{\gth}{2^{2j+2}}
+2^{j+1}\left(\fr{\gth}{2^{j+1}}-\fr{1}{2^{j+1}}\right)^2
=\gth H(q)+\fr{(\gth-1)^2}{2^{j+1}}.
\]
Hence, 
\[
H(\gth q)-\gth H(q)=o(\gth-1) \ \ \ \text{ as } \ \gth\to 1+,
\]
which violates the inequality in (A)$_{\tny+}$. Note finally that 
$q=1/2^{j+1}$ can be taken as close to $p=0$ as we wish.

Next, we show that if  $H\in C(\T^n\tim\R^n)$ satisfies (A)$_{\tny-}$, then it  
satisfies (A6)$_{\tny-}$. 

For this, let $H\in C(\T^n\tim\R^n)$ satisfy (A6)$_{\tny-}$. 
Let $\eta_0>0$ be the constant and $\nu$ the function on $(0,\,\eta_0)$ given by 
(A)$_{\tny-}$. 

Fix any $\eta\in(0,\,\eta_0)$ and $\gth>1$, and set $\gl=\gth^{-1}\in(0,\,1)$.  
Let $x,p,q\in\R^n$ and assume that $H(x,p)\leq 0$ and $H(x,q)\geq -\eta$. 
Set 
\begin{equation}\label{3.1}
\psi=\psi(\eta,\gth):=(\gth-1)\min\{\gth^{-1}\nu(\eta),\,\eta\}
=\min\{(1-\gl)\nu(\eta),\,(\gth-1)\eta\}. 
\end{equation}

It is enough to show that 
\begin{equation}\label{3.2}
H(x,p+\gth(q-p))\geq -\gth \eta+\psi. 
\end{equation}
To the contrary, we suppose that 
\begin{equation}\label{3.3}
H(x,p+\gth(q-p))< -\gth \eta+\psi. 
\end{equation}
Set $r=p+\gth(q-p)$ and note that $q=\gl r+(1-\gl) p$. 
Note by the choice of $\psi$ that
\[
H(x,r)< -\gth\eta+(\gth-1)\eta=-\eta. 
\]
Hence, using (A)$_{\tny-}$, \erf{3.2} and \erf{3.1}, we deduce that
\[
H(x,q)=H(x,\gl r+(1-\gl)p)\leq \gl H(x,r)-\nu(\eta)\gl(1-\gl)
<\gl(-\gth+\psi)-\psi\gl=-\eta.
\]
This is a contradiction, which shows that \erf{3.2} holds.

Now, let $H\in C(\T^n\tim\R^n)$ satisfy (A6)$_{\tny+}$, and we show that for each $x\in\R^n$ 
the sublevel set $\{p\in\R^n\mid H(x,p)\leq 0\}$ is convex. 

To do this, we fix any $x\in\R^n$ and let 
$p_1,p_2\in K:=\{p\in\R^n\mid H(x,p)\leq 0\}$. 
We need to show that 
\begin{equation}\label{3.4}
\gl p_1+(1-\gl)p_2\in K \ \ \ \text{ for all }\gl\in[0,\,1].
\end{equation}
We suppose that this is not the case and will get a contradiction.  

Let $\eta_0>0$ and $\gth_0>0$ be the constants from 
(A6)$_{\tny+}$. 
Then, setting
\[
\gl_0=\sup\{\gl\in[0,\,1]\mid \gl p_1+(1-\gl)p_2\not\in K\}, 
\]
we have 
\[
\gl_0 p_1+(1-\gl_0)p_2\in K \ \ \ \text{ by the continuity of \ $H$.}
\]
By the definition of $\gl_0$, we may select a $\gl\in(0,\,\gl_0)$ so that 
\[
\gl p_1+(1-\gl)p_2\not\in K \ \ \ \text{ and } \ \ \ \gl\gth_0>\gl_0. 
\] 
Set 
\begin{align*}
q:&=\gl p_1+(1-\gl)p_2=p_2+\gl(p_1-p_2), 
\\\gth:&=\gl_0/\gl\in (1,\,\gth_0),
\end{align*}
and note that $H(x,q)>0$. Fix an $0<\eta<\eta_0$ so that $H(x,q)\geq \eta$, 
and use condition (A6)$_{\tny+}$, to get 
\[
H(x,p_2+\gth(q-p_2))>\gth\eta>0,
\]
and moreover, 
\[
0<H(x,p_2+\gth(q-p_2))=H(x,\gl_0p_1+(1-\gl_0)p_2)\leq 0.  
\]
This is a contradiction.   

An argument similar to the above guarantees that if $H\in C(\T^n\tim\R^n)$ satisfies 
(A6)$_{\tny-}$, then the sublevel set $\{p\in\R^n\mid H(x,p)<0\}$ is convex for every $x\in\R^n$. 
We leave it for the interested reader to check this convexity property. 

The following example of $H(x,p)=H(p)$ explicitly shows that 
condition (A)$_{\tny-}$ is more stringent than (A6)$_{\tny-}$. 
Define the functions $f, g\in C(\R)$ by
\[
f(p)=
\begin{cases}
0&\text{ if } \ p\leq 0 \ \text{ or } \ p\geq 1, \\[3pt]
-p/2&\text{ if } \ 0\leq p\leq 1/2,\\[3pt]
-(p-1)^2&\text{ if } \ 1/2\leq p\leq 1,
\end{cases}
\] 
\[
g(p)=-p+\sum_{k=1}^\infty 2^{-k}f(2^k p),
\]
and then $H\in C(\R)$ (see Fig. 3 below) by 
\[
H(p)=\max\{g(p),\,g(1-p)\}.
\]
\begin{center}
\includegraphics[height=50mm]{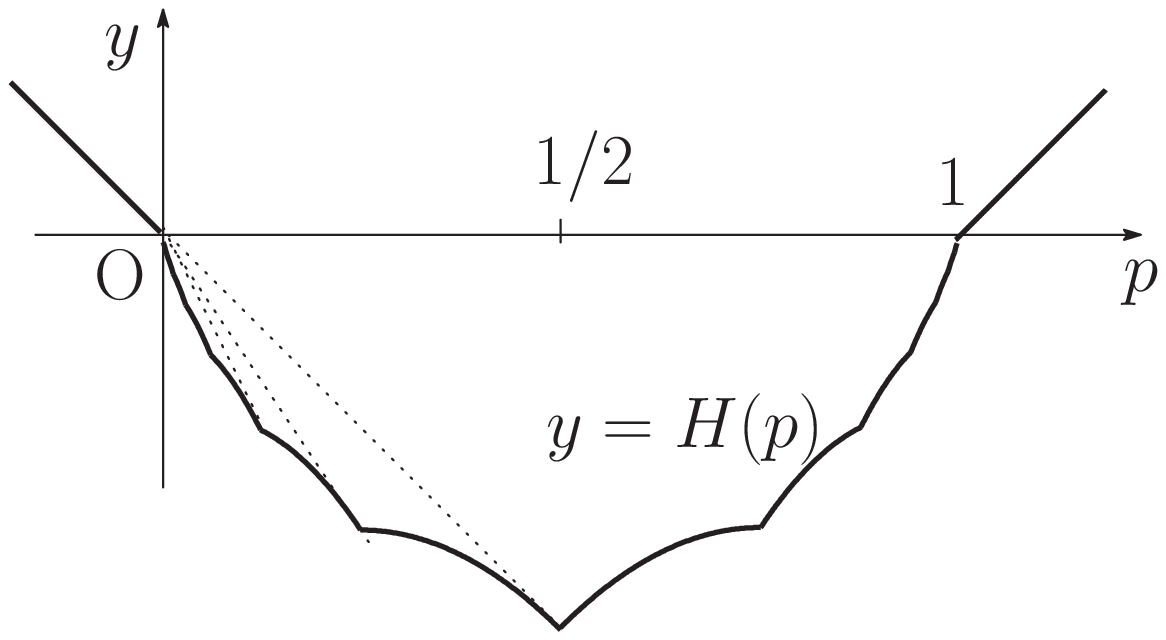}\\ \vskip-8pt 
{\small Fig. 3}
\end{center}
We do not give the detail, but observing that in the $py$ plane,  
for each slope $m<0$, the halh line $y=mp$, $p>0$, meets the graph $y=H(p)$ 
at exactly one point, we can deduce that the function $H$ satisfies (A6)$_{\tny-}$. 
On the other hand, setting $p=0$ and $q=1/2^k$, with $k\in\N$, observing that 
if $\fr 12\leq\gl\leq 1$, then $1/2^{k+1}\leq \gl q\leq 1/2^k$ and that for any $\fr 12\leq\gl\leq 1$, 
\[H(\gl q)=-\fr{k+1}{2^{k+1}}
-\fr{(\gl-1)^2}{2^k}, 
\]
and hence, 
\begin{align*}
H(q)&=-\fr{k+1}{2^{k+1}}, \\
H(\gl q)&=\gl H(q)-\fr{(\gl-1)^2}{2^{k}},
\end{align*}
we may deduce that (A)$_{\tny-}$ does not hold with the current function $H$. 

Next, we remark that 
under hypotheses (A2)--(A4), conditions (A6)$_{\tny+}$ and (A6)$_{\tny-}$ are equivalent 
to the following 
(A7)$_{\tny+}$ and (A7)$_{\tny-}$, respectively.  
\medskip

\begin{itemize}
\item[(A7)$_{\tny+}$]There exist constants $\eta_0>0$ and $\gth_0>1$ such that 
for all $(\eta,\gth)\in (0,\eta_0)\tim(1,\gth_0)$,  
$x, p,q\in\R^n$, if $H(x,p)\leq 0$ and $H(x,q)\geq \eta$, then 
\[
H(x,p+\gth (q-p))> \eta\gth . 
\] 
\item[(A7)$_{\tny-}$]There exist constants $\eta_0>0$ and $\gth_0>1$ and for all  
$(\eta,\gth)\in (0,\eta_0)\tim(1,\gth_0)$, $x, p,q\in\R^n$, if $H(x,p)\leq 0$ and $H(x,q)\geq -\eta$, 
then 
\[
H(x,p+\gth (q-p))>-\eta\gth. 
\] 
\end{itemize}
\medskip

Indeed, it is clear that (A6)$_{\tny\pm}$ imply (A7)$_{\tny\pm}$, respectively. 
On the other hand, assuming that (A7)$_{\tny+}$ holds, choosing $R>0$ so large that
\[
H(x,p)>\eta_0\gth_0 \ \ \ \text{ if } \ |p|>R,
\]
where $\eta_0>0$ and $\gth_0>1$ are the constants from (A7)$_{\tny+}$, 
and setting 
\[
\psi(\eta,\gth)
=\min\{H(x,p+\gth(q-p))-\gth\eta\mid x\in\T^n,\, H(x,p)\leq 0,\, H(x,q)\geq\eta\}
\]
for any $(\eta,\gth)\in(0,\,\eta_0)(1,\,\gth_0)$
we observe that $\psi(\eta,\gth)$ is positive and satisfies
\[
H(x,p+\gth(q-p))\geq \eta\gth+\psi(\eta,\gth)
\]  
for all $(x,p,q)\in\R^{3n}$ such that $H(x,p)\leq 0$ and $H(x,q)\geq\eta$, which shows that 
(A6)$_{\tny+}$ holds. Similarly, we see that (A7)$_{\tny-}$ implies (A6)$_{\tny-}$.

Finally, we remark that under (A2)--(A4), conditions 
(A6)$_{\tny+}$ and (A6)$_{\tny-}$ are equivalent to the following 
(A8)$_{\tny+}$ and (A8)$_{\tny-}$, respectively. 
\smallskip

\begin{itemize}
\item[(A8)$_{\tny+}$]There exist constants $\eta_0>0$ and $\gth_0>1$ and for each 
$(\eta,\gth)\in (0,\eta_0)\tim(1,\gth_0)$ a constant $\psi=\psi(\eta,\gth)>0$ such that 
for all $x, p,q\in\R^n$, if $H(x,p)=0$ and $H(x,q)=\eta$, then 
\[
H(x,p+\gth (q-p))\geq \eta\gth +\psi. 
\] 
\item[(A8)$_{\tny-}$]There exist constants $\eta_0>0$ and $\gth_0>1$ and for each 
$(\eta,\gth)\in (0,\eta_0)\tim(1,\gth_0)$ a constant $\psi=\psi(\eta,\gth)>0$ such that 
for all $x, p,q\in\R^n$, if $H(x,p)=0$ and $H(x,q)=-\eta$, then 
\[
H(x,p+\gth (q-p))\geq -\eta\gth +\psi. 
\] 
\end{itemize}
\smallskip

It is clear that (A6)$_{\tny\pm}$ imply (A8)$_{\tny+}$, respectively.   

We next show that (A8)$_{\tny+}$ implies (A7)$_{\tny+}$, which is equivalent to (A6)$_{\tny+}$. 
We leave it to the reader to check that (A8)$_{\tny-}$ implies (A7)$_{\tny-}$.  

Let $\eta_0$ and $\gth_0$ be 
the constants from (A8)$_{\tny+}$.  We may assume, by replacing $\gth_0$ by 
a smaller one if needed, that $\gth_0<2$. 

Fix any $0<\eta<\eta_0/2$ and $(x,p,q)\in\R^{3n}$ such that $H(x,p)\leq 0$ and 
$H(x,q)\geq\eta$. It is enough to 
show that for all $0<\gth<\gth_0$, 
\begin{equation}\label{3.5}
H(x,p+\gth(q-p)>\gth\eta. 
\end{equation}
We assume for  contradiction that \eqref{3.5} does not hold. We set
\[
\Theta=\{\gth\in(1,\gth_0)\mid H(x,p+\gth(q-p))\leq\gth\eta\}. 
\]
Note by the above assumption that \ $\Theta\not=\emptyset$ and set  
\ $\hat\gth:=\inf\Theta$. It is clear that \ $1\leq \hat\gth<\gth_0$, 
$H(x,p+\hat\gth(q-p))=\hat\gth\eta$ \ since $H(x,q)\geq\eta$ \ 
and \ $H(x,p+\gth(q-p))>\gth\eta$ \ if \ 
$1<\gth<\hat\gth$.

In what follows, we write $H(r):=H(x,r)$ and 
$q_\gth=p+\gth(q-p)$ \ for \ $0\leq\gth<\gth_0$.
We fix a $\gl\in[0,\,1)$ so that \ $H(p+\gl(q-p))=0$. Note that \ $H(q_\gl)=0$. 
\begin{center}
\includegraphics[height=56mm]{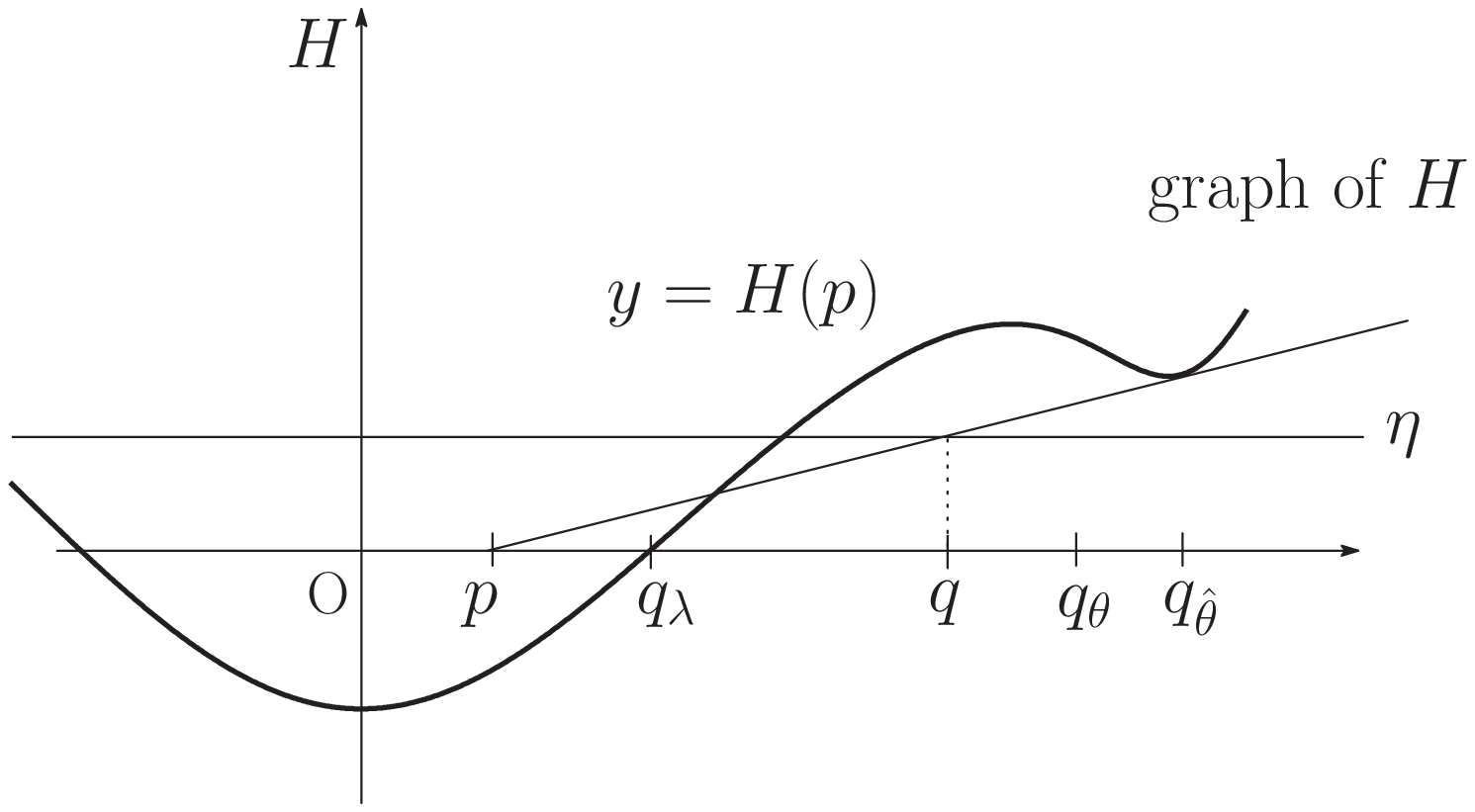}\\ \vskip-8pt 
{\small Fig. 4} 
\end{center}

Consider the case where \ $\hat\gth=1$. In this case we have \ 
$q_{\hat\gth}=q$ \ and \ $H(q)=\eta$. 
By (A8)$_{\tny+}$, we get
\begin{equation}\label{3.6}
H(r+\rho(q-r))>\rho\eta \ \ \ \text{ for all } \ \rho\in(1,\gth_0). 
\end{equation}
Noting that
\[
q_\gl+\rho(q-q_\gl)=p+\gl(q-p)+\rho(q-p-\gl(q-p))=p+(\gl+(1-\gl)\rho)(q-p),
\]
from \eqref{3.6} we get 
\[
H(p+(\gl+(1-\gl)\rho)(q-p))>\rho\eta\geq (\gl+(1-\gl)\rho)\eta \ \ \ 
\text{ for all } \ \rho\in(1,\gth_0), 
\]
which implies that \ $\Theta\cap(1,\,\gl+(1-\gl)\gth_0)=\emptyset$. 
This ensures that \ $
\hat\gth\geq \gl+(1-\gl)\gth_0>1$, 
which contradicts that \ $\hat\gth=1$. 

Consider next the case where $\hat\gth>1$. Recall that 
\ $H(q_{\hat\gth})=\hat\gth\eta<2\eta<\eta_0$ \ and \  
$H(q_\gth)>\gth\eta$ \ for all \ $\gth\in(1,\,\hat\gth)$. 
Setting \ $\eta_\gth:=H(q_\gth)$, we observe that 
if \ $1<\gth<\hat\gth$ \ is close to \ $\hat\gth$, then 
\ $\gth\eta<\eta_\gth<\eta_0$. 
For any such $\gth$, by (A8)$_{\tny+}$, we get
\begin{equation}\label{3.7}
H(q_\gl+\rho(q_\gth-q_\gl))>\rho\eta_\gth \ \  \ \text{ for all } \ \rho\in(1,\,\gth_0).
\end{equation}
Note that \ $  
q_\gl+\rho(q_\gth-q_\gl)
=p+(\gl+\rho(\gth-\gl))(q-p)$. 
We select $\hat\rho$ so that \ $\hat\gth=\gl+\hat\rho(\gth-\gl)$ or, equivalently, 
$\hat\rho=(\hat\gth-\gl)/(\gth-\gl)$. 
Since $\gth$ is assumed to be close enough to $\hat\gth$, we may assume that 
\ $\hat\rho\in(1,\,\gth_0)$. 
Thanks to \eqref{3.7}, we get  
\[
\hat\gth\eta=H(q_{\hat\gth})
=H(q_\gl+\hat\rho(q_\gth-q_\gl))>\hat\rho\eta_\gth>\fr{\hat\gth-\gl}{\gth-\gl}\eta\gth.
\]
Thus, we get
\ $\hat\gth(\gth-\gl)>\gth(\hat\gth-\gl)$ 
or, equivalently, 
\ $
\gl(\hat\gth-\gth)<0$. 
This is a contradiction.   
We thus see that (A8)$_{\tny+}$ implies (A7)$_{\tny+}$.

\section{A generalization of (A6)$_{\tny\pm}$} \label{sec:NR}

We recall that 
the following conditions on the Hamiltonian $H\in C(\T^n\tim\R^n)$ has been 
introduced by Namah-Roquejoffre \cite{NR} in their study of 
the large time asymptotic behavior of solutions of (CP).  
\begin{itemize}
\item[(NR1)] The function $H(x,p)$ is convex in $p\in\R^n$ for every $x\in\R^n$. \vskip3pt
\item[(NR2)] $\disp \min_{p\in\R^n}H(x,p)=H(x,0)$ \ for all \ $x\in\R^n$.\vskip3pt
\item[(NR3)] $\disp\max_{x\in\R^n}H(x,0)=0$. \vskip3pt 
\item[(NR4)] $\disp \lim_{r\to\infty}\inf\{H(x,p)\mid (x,p)\in\T^n\tim\R^n,\, |p|\geq r\}=\infty$. 
\end{itemize}

Assume for the moment that $H\in C(\T^n\tim\R^n)$ satisfies (NR3). Then the function $v(x)\equiv 0$ 
solves in the classical sense
\[
H(x,Dv(x))=H(x,0) \ \ \ \text{ in }\ \R^n.
\]
Here, if $H(x,0)<0$ for some points $x$, then $v$ is a ``strict'' subsolution of 
$H(x,Du)=0$ in the set $\{x\in\R^n\mid H(x,0)<0\}$. 

We take this observation into account and modify 
conditions (A6)$_{\tny\pm}$ as follows. The new conditions depend 
on our choice of a subsolution $v_0$ of \eqref{S}, 
which plays the same role as the function $v_0$ 
in the proof of Theorem \ref{thm:main}. As we have already 
noted in Remark \ref{rem:v_0}, the function $v_0$ in the proof of Theorem \ref{thm:main} 
is needed to be just a subsolution of \eqref{S} and the outcome may depend on our 
choice of $v_0$. 
Now we fix a subsolution 
$v_0\in C(\T^n)$ of \eqref{S} and choose a nonnegative function $f\in C(\T^n)$ so that 
$v_0$ is a subsolution of 
\[
H(x,Dv_0(x))\leq -f(x) \ \ \ \text{ in } \ \R^n. 
\]
\smallskip

\begin{itemize}
\item[(A9)$_{\tny+}$] 
There exist constants $\eta_0>0$ and $\gth_0>1$ and for each 
$(\eta,\gth)\in (0,\eta_0)\tim(1,\gth_0)$ a constant $\psi=\psi(\eta,\gth)>0$ such that 
for all $x, p,q\in\R^n$, if $H(x,p)\leq -f(x)$ and $H(x,q)\geq \eta$, then 
\[
H(x,p+\gth (q-p))\geq \eta\gth +\psi. 
\] 
\item[(A9)$_{\tny-}$] 
There exist constants $\eta_0>0$ and $\gth_0>1$ and for each 
$(\eta,\gth)\in (0,\eta_0)\tim(1,\gth_0)$ a constant $\psi=\psi(\eta,\gth)>0$ such that 
for all $x, p,q\in\R^n$, if $H(x,p)\leq -f(x)$ and $H(x,q)\geq -\eta$, then 
\[
H(x,p+\gth (q-p))\geq -\eta\gth +\psi. 
\] 
\end{itemize}
\smallskip

The same proof as that of Theorem \ref{thm:main} yields the following proposition. 
We do not repeat its proof here, and leave it to the reader to check the detail. 

\begin{thm}\label{thm:general} 
The assertion of Theorem \ref{thm:main}, with \emph{(A9)$_{\tny\pm}$} in place of 
\emph{(A6)$_{\tny\pm}$}, holds.   
\end{thm}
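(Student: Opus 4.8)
The plan is to mirror the argument for Theorem~\ref{thm:main} verbatim, the only change being that the fixed solution $v_0$ of \eqref{S} is replaced by a fixed subsolution $v_0\in C(\T^n)$ of \eqref{S} together with the nonnegative $f\in C(\T^n)$ for which $v_0$ is a subsolution of $H(x,Dv_0)\le -f(x)$. First I would repeat the reduction lemma: by the comparison principle (Theorem~\ref{thm:comp}) and density of $\Lip(\T^n)$ in $C(\T^n)$, it suffices to treat $u_0\in\Lip(\T^n)$, so that $u\in\Lip(\T^n\tim[0,\infty))$; here one also needs that $v_0$ may be taken Lipschitz, which holds since any continuous subsolution of a coercive equation is automatically Lipschitz. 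As before, setting $z(x,t)=v_0(x)$ and using Theorem~\ref{thm:comp} shows $u$ is bounded on $\olQ$, and after adding a constant to $v_0$ we may assume $0\le u(x,t)-v_0(x)\le C_0$ on $\olQ$; also $L:=\max\{\|D_xu\|_\infty,\|D_xv_0\|_\infty\}<\infty$.

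\textbf{Under (A9)$_{\tny+}$.} I would define $w$ exactly by \erf{eq2} and prove the analogue of Proposition~\ref{prop:main}: $w$ is a subsolution of \erf{eq3} with $R=(2\gth_0+1)L$. The proof is word-for-word that of Proposition~\ref{prop:main} — triple the variables, extract $\p\in\olD^+v_0(\x)$, $\q\in\olD^-u(\x,\s)$ (with the $-\eta$ shift) and the superdifferential inclusion for $u$ at $(\x,\t)$ — except that the subsolution property of $v_0$ now yields $H(\x,\p)\le -f(\x)\le 0$ rather than $H(\x,\p)\le0$. Since $f\ge0$, this still gives $H(\x,\p)\le0$, which is all that is needed to invoke (A9)$_{\tny+}$ (whose hypothesis is precisely $H(x,p)\le -f(x)$, weaker than $H(x,p)\le0$); so the chain of inequalities producing \erf{eq3} goes through unchanged. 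Then Lemmas~\ref{lem-bounds} and \ref{lem:w-infty} are literally the same (they only used $0\le u-v_0\le C_0$ and the structural inequality \erf{eq3}), giving $\lim_{t\to\infty}\max\{w(x,t),0\}=0$ uniformly. The passage from this to the asymptotic monotonicity \erf{asymp.mon} of any locally uniform limit $z$ of time-shifts $u(\cdot,\cdot+\tau_j)$, and thence to uniform convergence of $u(\cdot,t)$ to some $u_\infty\in\Lip(\T^n)$ via Theorem~\ref{thm:comp}, is copied verbatim. Finally $u_\infty$ solves \eqref{S} by stability of viscosity solutions, and lies in $\Lip(\T^n)$ by Theorem~\ref{thm:EP} (or directly by coercivity).

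\textbf{Under (A9)$_{\tny-}$.} Symmetrically, I would use the truncated function $w(x,t)=\max_{0\le s\le t}[u(x,t)-v_0(x)-\gth(u(x,s)-v_0(x)-\eta(s-t))]$, prove Lemma~\ref{w-bound} and the analogue of Proposition~\ref{prop:minus} on $\R^n\tim(T,\infty)$ with $T=C_0/\eta$, again observing that the subsolution inclusion gives $H(\x,\p)\le -f(\x)\le0$, so (A9)$_{\tny-}$ applies exactly where (A6)$_{\tny-}$ did; the monotonicity obtained is then the reverse one ($z$ nonincreasing in $t$), and the conclusion follows as before.

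\textbf{Main obstacle.} There is essentially no new difficulty: the single point to be checked is that replacing the supersolution hypothesis ``$v_0$ solves \eqref{S}'' by ``$v_0$ is a subsolution of $H(x,Dv_0)\le -f(x)$'' does not break the key computation, and this is exactly the content of Remark~\ref{rem:v_0} — the computation never uses the supersolution inequality for $v_0$, only $H(\x,\p)\le0$ for $\p\in\olD^+v_0(\x)$. The genuinely interesting issue, which justifies introducing $f$, is not in this proof but in verifying that (A9)$_{\tny\pm}$ are satisfiable in cases (such as the Namah--Roquejoffre setting (NR1)--(NR4) with $v_0\equiv0$ and $f(x)=-H(x,0)$) where (A6)$_{\tny\pm}$ would fail; that discussion is orthogonal to the proof of Theorem~\ref{thm:general} itself.
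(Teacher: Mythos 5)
Your proposal is correct and is essentially the paper's own argument: the paper proves Theorem~\ref{thm:general} by repeating the proof of Theorem~\ref{thm:main} verbatim, relying (as flagged in Remark~\ref{rem:v_0}) on the fact that only the subsolution property of $v_0$ is ever used, and your outline checks exactly the points the paper leaves to the reader. One small wording slip: to invoke (A9)$_{\tny+}$ you need $H(\x,\p\,)\leq -f(\x)$ (a stronger fact than $H(\x,\p\,)\leq 0$, not a weaker one), but this is precisely what the subsolution inequality $H(x,Dv_0)\leq -f(x)$ delivers for $\p\in\olD{}^+v_0(\x)$, so the argument is unaffected.
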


In the following, we show that if $H\in C(\T^n\tim\R^n)$ satisfies (NR1)--(NR3), then 
(A9)$_{\tny-}$ holds. 

We choose $v_0$ to be the function $v_0(x)\equiv 0$. This function $v_0$ satisfies
\[
H(x,Dv_0(x))=H(x,0)=-f(x) \ \ \ \text{ for all } \ x\in\R^n,  
\]
where $f(x):=-H(x,0)$. 

Fix any $x,p,q\in\R^{3n},\,\eta>0$ such that $H(x,p)\leq -f(x)$ and $H(x,q)\geq -\eta$. 
To prove that (A9)$_{\tny-}$ holds with $f(x)=-H(x,0)$,  
it is enough to show that there is a constant $\psi(\eta,\gth)>0$ such that
\[
H(x,p+\gth(q-p))\geq -\gth\eta+\psi(\eta,\gth). 
\]

Since 
\[H(x,p)\leq -f(x)=H(x,0)=\min_{r\in\R^n}H(x,r),\] 
we have \ $H(x,p)=-f(x)=H(x,0)$. 
Fix any $\gth>1$. By the convexity of $H$, we have 
\begin{align*}
H(x,p+\gth(q-p))&\geq H(x,p)+\gth(H(x,q)-H(x,p))
\\&=-f(x)+\gth(-\eta+f(x))
=-\gth\eta+(\gth-1)f(x),
\end{align*}
while we have 
\[
H(x,p+\gth(q-p))\geq H(x,0)=-f(x)=-\gth\eta+(\gth\eta-f(x)). 
\]
Setting
\[
\psi(\eta,\gth)=\min_{x\in\T^n}\max\{(\gth-1)f(x),\,\gth\eta-f(x)\},
\]
we observe that \ $\psi(\eta,\gth)>0$ \ and   
\[
H(x,p+\gth(q-p))\geq -\gth\eta+\psi(\eta,\gth). 
\]
Thus, $H$ satisfies (A9)$_{\tny-}$.

\bibliographystyle{amsplain}

\end{document}